\numberwithin{equation}{section}
\newcommand{\CC}{\mathbb {C}}
\newcommand{\RR}{\mathbb{R}}
 \DeclareMathOperator{\dist}{dist}
\renewcommand{\phi}{\varphi}
\newcommand{\rea}{{\rm Re}\,}
\newcommand{\he}{\mathcal{H}(T,A, \mu)}
\newcommand{\ho}{\mathcal{H}}
\newtheorem{Thm}{Theorem}[section]
\newtheorem{theorem}[Thm]{Theorem}
\newtheorem{example}{Example}
\newtheorem{lemma}[Thm]{Lemma}
\newtheorem{proposition}[Thm]{Proposition}
\newtheorem{corollary}[Thm]{Corollary}
\newtheorem{remark}[Thm]{Remark}
\newtheorem{definition}{Definition}
\begin{document}
\sloppy
\title[Localization of zeros in Cauchy--de Branges spaces]{Localization of zeros 
in Cauchy--de Branges spaces}
\author{Evgeny Abakumov, Anton Baranov, Yurii Belov}
\address{Evgeny Abakumov,
\newline  University Paris-Est,  LAMA (UMR 8050), UPEM, UPEC, CNRS, F-77454, Marne-la-Vall\'ee, France
\newline {\tt evgueni.abakoumov@u-pem.fr}
\smallskip
\newline \phantom{x}\,\, Anton Baranov,
\newline Department of Mathematics and Mechanics, 
St.~Petersburg State University, St.~Petersburg, Russia,
\newline {\tt anton.d.baranov@gmail.com}
\smallskip
\newline \phantom{x}\,\, Yurii Belov,
\newline   Department of Mathematics and Computer Science, St.~Petersburg State University, St.~Petersburg, Russia,
\newline {\tt j\_b\_juri\_belov@mail.ru}
}

\begin{abstract} We study the class of discrete measures in the complex plain
with the following property: up to a finite number, 
all zeros of any Cauchy transform of the measure (with $\ell^2$-data) 
are localized near the support of the measure. 
We find several equivalent forms of this property and prove that 
the parts of the support attracting zeros of Cauchy transforms are ordered
by inclusion modulo finite sets. 
\end{abstract}

\keywords{Cauchy transforms, de Branges spaces, distribution of zeros of entire functions, 
polynomial approximation}
\subjclass{30D10, 30D15, 46E22, 41A30, 34B20}

\thanks{The work was supported by the joint grant of Russian Foundation for Basic Research 
(project 17-51-150005-NCNI-a) and CNRS, France 
(project PRC CNRS/RFBR 2017-2019 ``Noyaux reproduisants dans
des espaces de Hilbert de fonctions analytiques''). }

\maketitle

\hfill {\it \small Dedicated to the memory of Serguei Shimorin,}  

\hfill {\it \small a brilliant mathematician and a wonderful person}

\section{Introduction and main results}

Representations of analytic functions via Cauchy transforms of 
planar measures is a classical theme in function  theory. 
Of special interest are expansions of meromorphic functions
as Cauchy transforms of discrete (atomic) measures. A substantial number of papers 
deals with distribution of zeros of such Cauchy transforms. 
Note that zeros of Cauchy transforms are equilibrium points 
of logarithmic potentials for the corresponding discrete measures.   

Let $T=\{t_n\}_{n\in\mathbb{N}}$ 
be a sequence of distinct complex numbers with $|t_n| \to\infty$, $n\to\infty$. 
Then, for any sequence $a=\{a_n\}$ such that $\sum_n |t_n|^{-1} |a_n| <\infty$ 
one can consider the Cauchy transform 
$$
\mathcal{C}_{a}(z) = \sum_{n=1}^\infty \frac{a_n}{z-t_n}.
$$ 
J.~Clunie, A.~Eremenko and J.~Rossi \cite{cer} conjectured that 
the Cauchy transform $\mathcal{C}_{a}$ has 
infinitely many zeros if all $a_n$ are positive. 
In the general case this conjecture remains open,  
for related results see \cite{cer, el, lr}. 
Clearly, if the coefficients $a_n$ are not positive, the corresponding 
sum can be the inverse to an entire function and, thus, can have no zeros. 

In \cite{abb} we studied some phenomena connected with the following 
heuristic principle: {\it if the coefficients $a_n$ are {\it extremely small}, then
all \textup(except a finite number of\textup)  
zeros of $\mathcal{C}_{a}$ are located near the set $T$ or near its part}. 
We called this {\it localization property}. In \cite{abb} only the case 
$T\subset \mathbb{R}$ was considered. This enabled us to relate the problem 
with the theory of de Branges spaces and structure of Hamiltonians for canonical systems.
In the present paper we consider the case of general (complex) $t_n$ 
and extend many of the results from \cite{abb} to this setting. One of the 
motivations for this study is the role of discrete Cauchy transforms 
in the functional model for rank one perturbations of compact normal operators
(see \cite{by3, b2018}).


\subsection{The spaces of Cauchy transforms and related spaces of entire functions} 
Let $\mu:=\sum_n\mu_n\delta_{t_n}$ be a positive measure on
$\mathbb{C}$ such that
$\sum_n \frac{\mu_n}{|t_n|^2 +1} < \infty$. With
any such $\mu$ we associate the Hilbert space $\mathcal{H}(T,\mu)$ of the
Cauchy transforms
$$
\mathcal{H}(T,\mu):=\biggl{\{}f:f(z)=\sum_n\frac{a_n\mu^{1/2}_n}{z-t_n},
\quad a = \{a_n\}\in\ell^2\biggr{\}}
$$
equipped with the norm
$\|f\|_{\mathcal{H}(T,\mu)}:=\|a\|_{\ell^2}$.
Note that the series in the definition 
of $\mathcal{H}(T,\mu)$ converges absolutely and uniformly on compact sets 
which do not  intersect $T$.

The spaces $\mathcal{H}(T,\mu)$ consist of meromorphic functions
which are analytic in $\mathbb{C}\setminus T$. To get rid of 
the poles, we will usually consider isometrically isomorphic
Hilbert spaces of entire functions.
Let $A$ be an entire function which has only simple zeros and whose zero set 
$\mathcal{Z}_A$ coincides with $T$. With any $T$, $A$ and $\mu$ as above
we associate the space $\mathcal{H}(T,A,\mu)$ of entire functions,
$$
\mathcal{H}(T,A,\mu):=\biggl{\{}F:F(z)= A(z)\sum_n\frac{a_n\mu^{1/2}_n}{z-t_n},
\quad a = \{a_n\}\in\ell^2\biggr{\}},
$$
where again the norm is given by 
$\|F\|_{\mathcal{H}(T,A,\mu)}:=\|a\|_{\ell^2}$.
Clearly, the mapping $f\mapsto Af$ is a unitary operator from  
$\mathcal{H}(T,\mu)$ to $\mathcal{H}(T,A,\mu)$.
We will use the term {\it Cauchy--de Branges spaces} for the 
spaces $\mathcal{H}(T,A,\mu)$. This is related to the fact that
the class of the spaces $\mathcal{H}(T,A,\mu)$ with $T\subset\RR$ 
coincides with the class of all de Branges spaces (see \cite{br}).

The spaces $\mathcal{H}(T,A,\mu)$ were introduced in full generality 
by Yu. Belov, T. Mengestie,
and K. Seip \cite{BMS}. They can also be described axiomatically. 
It is clear that the reproducing kernels of $\he$ at the points $t_n$
(which are of the form $\overline{A'(t_n)} \mu_n \cdot \frac{A(z)}{z-t_n}$)
form an orthogonal basis in $\he$. Conversely, if 
$\mathcal{H}$ is a reproducing kernel Hilbert space
of entire functions such that
\smallskip
\begin{enumerate} 
\item [(i)]
$\mathcal{H}$ has the {\it division property}, that is, $\frac{f(z)}{z-w} 
\in \mathcal{H}$ whenever $f\in\mathcal{H}$ and $f(w) = 0$,
\smallskip
\item [(ii)]
there exists an orthogonal basis of reproducing kernels in $\mathcal{H}$,
\end{enumerate}
\smallskip
then $\mathcal{H} = \mathcal{H}(T,A,\mu)$ for some choice of the parameters $T$, $A$ and $\mu$.
One can replace existence of an orthogonal basis by existence of a Riesz basis 
of normalized reproducing kernels. In this case $\mathcal{H}$ coincides with
some space $\mathcal{H}(T,A,\mu)$ as sets with equivalence of norms.


\subsection{Localization and strong localization} 
To simplify certain formulas, we will always assume in what follows that 
$$
|t_n| \ge 2, \qquad t_n\in T.
$$
Also we will always assume that $T$ is 
{\it a power separated sequence}: there exist numbers $C>0$ and
$N>0$ such that, for any $n$,
\begin{equation}
\label{powsep}
{\rm dist}\,(t_n, \{t_m\}_{m\ne n}) \geq C|t_n|^{-N}.
\end{equation}
Note that condition \eqref{powsep} implies that for some $c,\rho>0$
and for sufficiently large $n$ we have $|t_n| \ge cn^\rho$.
We always will choose $A$ to be an entire function of finite order 
with zeros at $T$.
Without loss of generality we may always fix $N$ in \eqref{powsep}
so large that $\sum_n |t_n|^{-N} < \infty$.

For an entire function $f$ we
denote by $\mathcal{Z}_f$ the set of all zeros of $f$.  
Let $D(z,r)$ stand for the open disc centered at $z$ of radius $r$.

Now we introduce the notion of {\it zeros localization}.

\begin{definition} 
We say that the space $\mathcal{H}(T,A,\mu)$
with a power separated sequence $T$ has the localization property if 
there exists a sequence of disjoint disks $\{D(t_n,r_n)\}$ 
with $r_n\to 0$ such
that for any nonzero $f\in \mathcal{H}(T,A,\mu)$ the set
$\mathcal{Z}_f\setminus\cup_n D(t_n,r_n)$ is finite and each disk
$D(t_n,r_n)$ contains at most one point of $\mathcal{Z}_f$ for any
$n$ except, possibly, a finite number.
\end{definition}

Since the space $\mathcal{H}(T,A, \mu)$ has the division property, 
one can construct a function from $\mathcal{H}(T,A,\mu)$ 
with zeros at any given finite set. Therefore the notion of localization
of the zeros near $T$ makes sense only up to finite-dimensional sets.

Our first result shows that the localization property in $\mathcal{H}(T,A,\mu)$
can be expressed in several natural ways. For a set $E$, we denote by $\# E$ 
the number of elements in $E$.

\begin{theorem}
\label{local}
Let $\mathcal{H}(T,A,\mu)$ be a Cauchy--de Branges space with a 
power separated $T$. The following statements are equivalent:
\begin{enumerate}
\begin{item}
$\mathcal{H}(T,A,\mu)$ has the localization property\textup; 
\end{item}
\begin{item}
There exists an unbounded set $S\subset \mathbb C$ such that the set
$\mathcal{Z}_f\cap S$ is finite for any nonzero
$f\in\mathcal{H}(T,A,\mu)$\textup;
\end{item}
\begin{item}
For any $f\in \mathcal{H}(T,A,\mu) \setminus\{0\}$ and $M>0$ we have
$\#\big(\mathcal{Z}_f \setminus \cup_n D(t_n, |t_n|^{-M})\big) <\infty$\textup;
\end{item}
\begin{item}
There is no nonzero $f\in\mathcal{H}(T,A,\mu)$ with infinite number
of multiple zeros.
\end{item}
\end{enumerate}
\end{theorem}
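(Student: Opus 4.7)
The plan is to prove the equivalences via the cycle (iii) $\Rightarrow$ (i) $\Rightarrow$ (ii) $\Rightarrow$ (iii), combined with the pair (i) $\Leftrightarrow$ (iv). The workhorse throughout is the division property of $\mathcal{H}(T,A,\mu)$: if $f \in \mathcal{H}(T,A,\mu)$ vanishes at $w$, then $f(z)/(z-w) \in \mathcal{H}(T,A,\mu)$, which lets one freely manipulate zero sets while staying inside the space.

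Two of the implications are straightforward. For (i) $\Rightarrow$ (ii), take $S := \mathbb{C} \setminus \bigcup_n D(t_n, r_n)$; this is unbounded because $r_n \to 0$, and by (i) each nonzero $f$ has finitely many zeros in $S$. For (iii) $\Rightarrow$ (i), pick $M$ exceeding the power-separation exponent $N$ from \eqref{powsep} and set $r_n := |t_n|^{-M}$; then power separation makes the disks eventually pairwise disjoint, and (iii) directly yields the first clause of the localization property. The ``at most one zero per disk'' clause of (i) is then deduced from (iv) (proved in parallel): two zeros of $f$ inside a common $D(t_n,r_n)$ for infinitely many $n$ can be peeled off by repeated division and, via a perturbation, used to manufacture a function with infinitely many multiple zeros.

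The pair (i) $\Leftrightarrow$ (iv) I would handle by perturbation. If $f$ has infinitely many multiple zeros $z_k$, choose $\phi \in \mathcal{H}(T,A,\mu)$ with $\phi(z_k) \neq 0$ for every $k$; then $f + \varepsilon \phi$ splits each multiple zero into a cluster of simple zeros inside a disk of radius $O(\sqrt{\varepsilon})$ around $z_k$. A careful balance of $\varepsilon$ against the localization radii $r_{n_k}$ (or a diagonal choice of perturbations combined into a single element of $\mathcal{H}(T,A,\mu)$) produces a perturbation of $f$ with two distinct zeros in $D(t_{n_k}, r_{n_k})$ for infinitely many $k$, contradicting the ``at most one zero per disk'' clause of (i). The converse direction of this equivalence proceeds along the same lines.

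The main obstacle is (ii) $\Rightarrow$ (iii). Suppose (iii) fails: some $f \in \mathcal{H}(T,A,\mu)$ has infinitely many zeros $z_k$ satisfying $\mathrm{dist}(z_k, T) \ge |t_{n_k}|^{-M}$ for a fixed $M$. From this supply of ``free'' zeros I would construct a nonzero $g \in \mathcal{H}(T,A,\mu)$ with infinitely many zeros in the prescribed unbounded set $S$, contradicting (ii). The natural construction is $g(z) = f(z) \cdot \sum_k c_k/(z - z_k)$, where each summand corresponds to the division-property element $f(z)/(z - z_k) \in \mathcal{H}(T,A,\mu)$, and the coefficients $c_k$ are chosen so that the rational factor $\sum_k c_k/(z-z_k)$ vanishes at a prescribed infinite sequence of points in $S$. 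The delicate part, and the bulk of the technical work, is showing that with the polynomial separation bound $\mathrm{dist}(z_k, T) \ge |t_{n_k}|^{-M}$ one can obtain $\ell^2$-type control on $\|f/(z - z_k)\|_{\mathcal{H}(T,A,\mu)}$ and solve the linear system for $\{c_k\}$ compatibly with convergence and nontriviality of $g$; this is where the Cauchy-transform representation of elements of $\mathcal{H}(T,A,\mu)$ enters decisively.
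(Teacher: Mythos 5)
Your easy implications ((i) $\Rightarrow$ (ii), and the first clause of (iii) $\Rightarrow$ (i)) are fine, but the three hard steps all rest on constructions that are either not carried out or would not work, and the one tool that makes the paper's proof go through is absent from your write-up: an explicit membership criterion for $\mathcal{H}(T,A,\mu)$ (Theorem \ref{inc}: a finite-order entire $g$ lies in the space iff $\sum_n |g(t_n)|^2/(|A'(t_n)|^2\mu_n)<\infty$ plus two growth conditions relative to $|A|$). The division property only lets you remove finitely many zeros; every nontrivial step of Theorem \ref{local} requires \emph{relocating infinitely many zeros at once}, and the paper does this multiplicatively: multiply $F$ by a lacunary quotient $\prod_k\frac{z-a_k}{z-b_k}$, where the $b_k$ are zeros of $F$ and the $a_k$ are the desired new locations, estimate the product (it is $O(|z|^{M+1})$ or $O(1)$ away from the $b_k$ when $a_k$ and $b_k$ are close or interlace lacunarily), and invoke Theorem \ref{inc}. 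For (ii) $\Rightarrow$ (iii) the paper takes $a_k=s_k\in S$ interlacing lacunarily with the off-$T$ zeros $z_k$; your alternative, $g=f\cdot\sum_k c_k/(z-z_k)$ with $\{c_k\}$ solving an infinite linear system so that the Cauchy transform vanishes on a prescribed infinite subset of $S$, is a hard free-interpolation problem with no solvability guarantee, and no guarantee that $g\ne 0$ or $g\in\mathcal{H}(T,A,\mu)$; you concede that ``the bulk of the technical work'' lies there, which is precisely the part that is not done.

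The second genuine gap is the link with (iv). The direction you actually need --- from ``infinitely many disks contain two distinct zeros of $f$'' to ``some function in the space has infinitely many multiple zeros'' --- cannot ``proceed along the same lines'' as your perturbation argument: adding $\varepsilon\phi$ generically \emph{splits} multiple zeros into simple ones; it never \emph{merges} two nearby simple zeros into a double zero, and no single perturbation would do so at infinitely many sites simultaneously. The paper again proceeds multiplicatively, via $\tilde F=F\prod_k(z-t_{n_k})^2/((z-z_k)(z-\tilde z_k))$, with membership checked by Theorem \ref{inc} using power separation. (Your forward perturbation direction is also shakier than you suggest --- the splitting radius depends on $|f''(z_k)|$, which you do not control, so a fixed $\varepsilon$ will not beat $r_{n_k}\to0$ for infinitely many $k$ --- but that direction is immediate anyway once zeros are counted with multiplicity, as the paper does.) Finally, your logical scheme is circular as stated: you derive the second clause of (i) from (iv), yet (iv) enters your diagram only through the equivalence (i) $\Leftrightarrow$ (iv); you would need (iii) $\Rightarrow$ (iv) as an independent step, which is exactly the implication the paper proves, again by the same multiplicative device.
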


Similarly to \cite{abb} one can introduce the notion of {\it strong localization}
where the zeros are localized only near the whole set $T$.
We say that the space $\mathcal{H}(T,A,\mu)$ with a power separated
sequence $T$ has the {\it strong localization property} if there exists
a sequence of disjoint disks $\{D(t_n,r_n)\}_{t_n\in T}$ 
with $r_n\to 0$ such that for any
nonzero $f\in \mathcal{H}(T,A,\mu)$ 
the set $\mathcal{Z}_f\setminus\cup_n D(t_n,r_n)$ is finite
and each disk $D(t_n,r_n)$ contains
exactly one point of $\mathcal{Z}_f$ for any $n$ except, possibly,
a finite number.

As in \cite{abb}, one can show that the strong
localization property is equivalent to the approximation by
polynomials.

\begin{theorem}
\label{strlocal}
The space $\mathcal{H}(T,A,\mu)$ has the strong localization
property if and only if the polynomials belong to
$L^2(\mu)$ and are dense there.
\end{theorem}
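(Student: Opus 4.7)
The proof splits naturally into two implications. Throughout I use the canonical isometry $\mathcal{H}(T,A,\mu) \to L^2(\mu)$ given by $F \mapsto g_F$ with $g_F(t_n) = F(t_n)/(A'(t_n)\mu_n)$; under this identification, a polynomial $P \in L^2(\mu)$ corresponds to
$$
F_P(z) = A(z) \sum_n \frac{P(t_n)\mu_n}{z-t_n} \in \mathcal{H}(T,A,\mu),
$$
with $\|F_P\| = \|P\|_{L^2(\mu)}$. Consequently, polynomials are dense in $L^2(\mu)$ if and only if the family $\{F_P\}$ is dense in $\mathcal{H}(T,A,\mu)$.

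For the direction \emph{polynomials in and dense in $L^2(\mu) \Rightarrow$ strong localization}, I would first verify strong localization for each $F_P$ individually. Splitting the Cauchy transform near $t_n$ as $\sum_m P(t_m)\mu_m/(z-t_m) = P(t_n)\mu_n/(z-t_n) + R_n(z)$ with $R_n$ holomorphic at $t_n$, one obtains
$$
F_P(z) = A'(t_n)\bigl(P(t_n)\mu_n + (z-t_n) R_n(t_n)\bigr) + O\bigl((z-t_n)^2\bigr).
$$
Membership of polynomials in $L^2(\mu)$ forces $\mu_n = o(|t_n|^{-k})$ for every $k$, so for large $n$ the ratio $|P(t_n)\mu_n/R_n(t_n)|$ tends to zero, and Rouch\'e's theorem produces exactly one zero of $F_P$ in a disk $D(t_n,r_n)$ with $r_n \to 0$. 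The extension to an arbitrary $F \in \mathcal{H}(T,A,\mu)$ proceeds by approximating $F = \lim F_{P_k}$ in norm; Hurwitz's theorem transfers the zeros of the $F_{P_k}$ into zeros of $F$ near each $t_n$, and Theorem~\ref{local} (which must be checked to apply under the polynomial-density hypothesis) rules out any escaping zeros away from $T$.

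For the reverse implication \emph{strong localization $\Rightarrow$ polynomials in and dense in $L^2(\mu)$}, I would first establish $\sum_n |t_n|^{2k}\mu_n < \infty$ for every $k$. The strong-localization hypothesis forces the zero-counting function of any $F \in \mathcal{H}(T,A,\mu)$ to match that of $A$ up to an $O(1)$ discrepancy; via Jensen's formula applied to $F/A$, this translates into decay estimates for the coefficient sequence and ultimately for $\mu_n$. For density, suppose $g \in L^2(\mu)$ is orthogonal to all polynomials, so $\sum_n g(t_n)\overline{t_n^k}\mu_n = 0$ for every $k \ge 0$. In the corresponding $F_g \in \mathcal{H}(T,A,\mu)$ these vanishing moments say that the asymptotic expansion of $A^{-1}F_g$ at infinity vanishes to all orders. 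Combined with strong localization for $F_g$, which severely constrains its zero distribution, this asymptotic vanishing is rigid enough to force $F_g \equiv 0$ via a uniqueness argument of Levinson/completeness type; equivalently, the residues of $A^{-1}F_g$ at $t_n$ (which are $g(t_n)\mu_n$) must all vanish.

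The main obstacle lies in this converse direction: converting a purely geometric statement about the location of zeros into the analytic/algebraic conclusion of polynomial density. The passage from zero counts to the $\mu_n$-decay rate via Jensen's formula is delicate because both the order of $A$ and the geometry of $T$ are essentially unrestricted, and the closing uniqueness step must be carried out without the Hamiltonian-system reductions available in the real-line treatment of \cite{abb}.
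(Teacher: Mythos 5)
Your overall skeleton matches the paper's (nonvanishing moments and Rouch\'e in one direction; vanishing moments versus a polynomial lower bound in the other), but several load-bearing steps are missing or would fail as stated. In the direction ``density $\Rightarrow$ strong localization'', your Rouch\'e argument for an individual $F_P$ has no lower bound on the regular part $R_n(t_n)$ (equivalently, on $|F_P|$ on the circle $|z-t_n|=r_n$): the superpolynomial decay of $\mu_n$ controls only the perturbation from above, and nothing prevents $R_n(t_n)$ from being tiny for infinitely many $n$. What is actually needed is the uniform lower bound \eqref{LReq}, $\inf\{|z|^L|f(z)|:\dist(z,T)\ge |z|^{-M},\ |z|>R\}>0$, which the paper derives for \emph{every} nonzero $f$ from density (density forces some moment $\sum_n a_n\mu_n^{1/2}t_n^k$ to be nonzero, whence $f/A\sim cz^{-k-1}$ away from $T$). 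Your subsequent Hurwitz step is also invalid: Hurwitz's theorem controls zeros on a fixed compact set for all $k$ large \emph{depending on that set}, so you cannot transfer ``one zero in $D(t_n,r_n)$ for all large $n$'' from the approximants $F_{P_k}$ to the limit $F$; that interchange of limits is exactly what has to be proved. The paper avoids approximation altogether, applying \eqref{LReq} directly to $F$ and running Rouch\'e against $g=f-d_k\mu_k^{1/2}/(z-t_k)$.

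In the converse direction both halves are gaps rather than proofs. Membership of the polynomials in $L^2(\mu)$ does not follow from Jensen's formula in any way you indicate; the paper's Proposition \ref{mu_n} proves $\mu_n\lesssim|t_n|^{-M}$ constructively: if $\mu_{n_k}\ge|t_{n_k}|^{-M}$ along a lacunary subsequence, then $A\,U^3\prod_k(1-z/t_{n_k})^{-1}$ (with $U$ a lacunary product over a sparse subfamily) belongs to the space by Theorem \ref{inc} and has infinitely many multiple zeros, contradicting Theorem \ref{local}\,(iv). Likewise, your ``uniqueness argument of Levinson type'' is precisely the missing content: one must show that strong localization forces $|F(z)|/|A(z)|\gtrsim|z|^{-L}$ off small disks around $T$, which the paper gets by factoring $F=F_1F_2$ with $F_1=A_1\prod_{t_n\in T_1}\frac{z-z_n}{z-t_n}\asymp A_1$ away from $T$ and checking that $F_2$ is at most a polynomial; only then does the superpolynomial decay coming from the vanishing moments yield a contradiction. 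Finally, a small but real point for complex $t_n$: orthogonality to polynomials gives $\sum_n\overline{g(t_n)}\,t_n^k\mu_n=0$, so the Cauchy transform with vanishing (unconjugated) moments is the one with residues $\overline{g(t_n)}\mu_n$, not $g(t_n)\mu_n$.
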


Note that polynomials belong to $L^2(\mu)$ whenever 
$\mathcal{H}(T,A,\mu)$  has the localization property 
(see Proposition \ref{mu_n}). Density of polynomials in weighted $L^p$ spaces 
is a classical problem in analysis studied by M.~Riesz, S.~Bernstein, N.~Akhiezer, 
S.~Mergelyan, L.~de~Branges, and many
others (see, e.g., \cite{Akh, Merg, Koo1, BS0, BS1}). 
All these works treat the case when the measure in question
is supported by the real line. For measures in $\mathbb{C}$ 
the problem seems to be largely open. 
\medskip


\subsection{Attraction sets}
\label{AS}
Let $\mathcal{H}(T,A,\mu)$ have the localization property. By the
property (iii) from Theorem \ref{local}, with any nonzero
function $f\in\mathcal{H}(T,A,\mu)$ we may associate a set
$T_f\subset T$ such that for some disjoint disks $D(t_n,r_n)$
all zeros of $f$ except, may be, a finite number
are contained in $\cup_{t_n\in T} D(t_n,r_n)$ and 
there exists exactly one point of $\mathcal{Z}_f$ in each disk
$D(t_n,r_n)$, $t_n\in T_f$, except, may be, a finite number of indices $n$.
Thus, the set $T_f$ is uniquely defined by $f$ up to finite sets.
Let us also note that we can always take $r_n=|t_n|^{-M}$ for
any $M>0$. 

\begin{definition}
Let $\mathcal{H}(T,A,\mu)$ have  the localization property. We will
say that $S\subset T$ is an attraction set if there exists
$f\in\mathcal{H}(T,A,\mu)$ such that $T_f=S$ up to a finite set.
\end{definition}

Note that $f(z) = \frac{A(z)}{z-t_0} \in \mathcal{H}(T,A,\mu)$ 
for any $t_0\in T$, and so $T$ is always an attraction set.

It turns out that the localization property implies the following
ordering theorem for the attraction sets of $\mathcal{H}(T,A,\mu)$.

\begin{theorem}
Let $\mathcal{H}(T,A,\mu)$ be a Cauchy--de Branges space with
the localization property. Then for any two attraction sets $S_1$,
$S_2$ either $S_1\subset S_2$ or $S_2\subset S_1$ up to finite
sets. 
\label{ordering}
\end{theorem}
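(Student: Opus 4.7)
The plan is to argue by contradiction: assume both $S_1 \setminus S_2$ and $S_2 \setminus S_1$ are infinite, and choose $f_1, f_2 \in \mathcal{H}(T,A,\mu)$ realising $T_{f_i} = S_i$ up to finite sets. The target is to produce a nonzero element of $\mathcal{H}(T,A,\mu)$ violating one of the equivalent conditions in Theorem \ref{local}; the most natural choice is (iv), namely that no nonzero function in the space has infinitely many multiple zeros.

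First I would establish a local characterisation of $T_f$. Writing $f(z) = a_n \mu_n^{1/2} \cdot A(z)/(z - t_n) + A(z) C_n(z)$ with $C_n(z) = \sum_{m \neq n} a_m \mu_m^{1/2}/(z - t_m)$ analytic at $t_n$, a direct Taylor expansion gives
\[
\frac{f(t_n)}{f'(t_n)} = \frac{a_n \mu_n^{1/2} A'(t_n)}{A'(t_n) C_n(t_n) + \tfrac{1}{2} A''(t_n) a_n \mu_n^{1/2}} \approx \frac{a_n \mu_n^{1/2}}{C_n(t_n)}
\]
whenever $|a_n \mu_n^{1/2}|$ is small compared to $|C_n(t_n)|$. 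Combined with Theorem \ref{local}(iii), this should yield: for all but finitely many $n$, one has $t_n \in T_f$ if and only if $|f(t_n)/f'(t_n)| \le |t_n|^{-M}$ for every $M > 0$. Applied to $f_1$ and $f_2$, this gives quantitative control of the ratios at points of $S_1 \bigtriangleup S_2$.

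Next I would study the one-parameter family $h_\lambda := f_1 - \lambda f_2 \in \mathcal{H}(T,A,\mu)$ through the meromorphic quotient $\Phi := f_1/f_2$. Away from common zeros of $f_1, f_2$, the zero set of $h_\lambda$ coincides with $\Phi^{-1}(\lambda)$. For each sufficiently large $n$, the disc $D(t_n, r_n)$ contains a zero and a pole of $\Phi$ if $t_n \in S_1 \cap S_2$, only a zero if $t_n \in S_1 \setminus S_2$, only a pole if $t_n \in S_2 \setminus S_1$, and neither otherwise. Consequently $t_n \in T_{h_\lambda}$ if and only if $\lambda$ lies in the image $\Phi(D(t_n, r_n))$, whose shape is controlled by the argument principle: a full neighbourhood of $0$, of $\infty$, or a small neighbourhood of $\Phi(t_n)$, respectively. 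Double zeros of $h_\lambda$ occur precisely at zeros of the Wronskian $W := f_1 f_2' - f_1' f_2$, with parameter $\lambda = \Phi(z_\ast)$.

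The final step, and the main obstacle, is to use the infinitude of $S_1 \setminus S_2$ and $S_2 \setminus S_1$ to find a specific $\lambda_0$ for which $h_{\lambda_0}$ has infinitely many multiple zeros. A direct pigeonhole on the critical values $\{\Phi(z_\ast) : z_\ast \in \mathcal{Z}_W\}$ does not suffice, since generically these values are distinct. The natural remedy is a normal-families or Nevanlinna-theoretic argument: the family $\{h_\lambda\}_{\lambda \in \mathbb{C}}$ is analytic in $\lambda$ with values in $\mathcal{H}(T,A,\mu)$, each member satisfies the localization property by hypothesis, and this severely restricts how the zeros of $h_\lambda$ can migrate among the discs $D(t_n, r_n)$ as $\lambda$ varies. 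Turning this rigidity into a genuine coincidence of critical values of $\Phi$ --- and hence producing the pathological element of $\mathcal{H}(T,A,\mu)$ --- is the technical heart of the argument, and is where the power-separation of $T$ and the $\ell^2$-coefficient structure enter essentially. This is the analogue, in the complex setting, of the chain of de~Branges subspaces that supplied the corresponding rigidity in \cite{abb}.
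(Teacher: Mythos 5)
Your setup --- argue by contradiction and study the pencil $h_\lambda = f_1 - \lambda f_2$ --- is genuinely the right starting point and parallels the paper, which analyzes $B - \alpha\tilde B$ for $\alpha$ ranging over an annulus. But the proposal has a real gap exactly where you flag it, and the target you choose makes the gap unbridgeable as stated. You aim to violate condition (iv) of Theorem \ref{local} by finding a single $\lambda_0$ for which $h_{\lambda_0}$ has infinitely many multiple zeros; this requires infinitely many critical points of $\Phi = f_1/f_2$ to share the critical value $\lambda_0$. Nothing in the hypotheses forces such a coincidence: the critical values $\Phi(z_\ast)$ are generically pairwise distinct, and the localization property constrains the location of the zeros of each individual $h_\lambda$, not the multiplicity structure across the family. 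The appeal to normal families or Nevanlinna theory is a placeholder, not an argument, and I do not see how to complete the proof along these lines.

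The paper's proof avoids multiple zeros entirely and instead derives a counting/growth contradiction. First (Lemma \ref{TfLemma}) one replaces $f_1,f_2$ by functions $A_1,\tilde A_1$ in the space whose zero sets are exactly $S_1,S_2\subset T$ up to finite sets, so that $A=A_1A_2=\tilde A_1\tilde A_2$ and, writing $A_1=BA_0$, $\tilde A_1=\tilde BA_0$ with $B,\tilde B$ coprime, the claim becomes that $B$ or $\tilde B$ has finitely many zeros. The key step (Proposition \ref{ordlemma}) assumes both have infinitely many zeros and that neither growth comparison $|B'(t_{n_k})|\le 8|t_{n_k}|^M|\tilde B(t_{n_k})|$ (nor its symmetric version) holds along a subsequence; Rouch\'e then forces $B-\alpha\tilde B$ to have exactly one zero in a small disc around every point of $\mathcal{Z}_B\cup\mathcal{Z}_{\tilde B}$ for every $\alpha$ in $1\le|\alpha|\le 2$, a covering argument produces a continuum of $\alpha$ for which these are all the zeros, and comparing the factorizations $B-\alpha\tilde B=B\tilde BR_\alpha e^{Q_\alpha}\Pi_\alpha$ at two (or four) values of $\alpha$ gives a contradiction. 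The growth comparison thus obtained is then used to build an explicit function $A_1U_2/U_1$ (with $U_1,U_2$ lacunary products, $\mathcal{Z}_{U_1}\subset\mathcal{Z}_B$ and $\mathcal{Z}_{U_2}$ far from $T$) which lies in $\mathcal{H}(T,A,\mu)$ by Theorem \ref{inc} yet has infinitely many zeros away from $T$ --- contradicting condition (ii)/(iii) of Theorem \ref{local}, not (iv). If you want to salvage your approach, redirect it at those conditions: the usable rigidity of the pencil is that for most $\alpha$ all zeros of $h_\alpha$ must sit near $S_1\cup S_2$, one per disc, and it is this over-determination, not a coincidence of critical values of $\Phi$, that produces the contradiction.
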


This ordering rule has some analogy with the de Branges Ordering
Theorem for the chains of de Branges subspaces.  For the case of de Branges spaces
(i.e., $T\subset\RR$) the ordering structure of attraction sets was proved in 
\cite[Theorem 1.8]{abb}. 
Two different proofs were given: one of them used the de Branges Ordering Theorem,
while the other used only a variant of Phragm\'en--Lindel\"of principle
due to de Branges \cite[Lemma 7]{br}, a deep result which is one of the main
steps for the Ordering Theorem. 

These methods are no longer available in the case of nonreal $t_n$. However,
it turned out that one can give a completely elementary proof 
of Theorem \ref{ordering}, independent of de Branges' Lemma. 
Thus, we can essentially simplify the proof of \cite[Theorem 1.8]{abb}.
\medskip


\subsection{Localization of type $N$}
\label{AST}
We say that the space $\mathcal{H}(T,A,\mu)$ has
{\it the localization property of type $N$} if there exist  $N$
subsets $T_1$, $T_2$,...,$T_N$ of $T$ such that $T_j\subset T_{j+1}$, $1\leq j\leq N-1$,
$\#(T_{j+1}\setminus T_j)=\infty$ and for any nonzero
$f\in\mathcal{H}(T,A,\mu)$ we have $T_f=T_j$ for some $j$, $1\le j\le N$, 
up to  finite sets, moreover, $N$ is the smallest integer with this property.
Clearly, in this case $T_N=T$ up to a finite set. 
The strong localization is the localization of type $1$.

In what follows we say that an entire function $F$ of finite order 
is in the {\it generalized Hamburger--Krein class}
if $F$ has simple zeros $\{z_n\}$, 
\begin{equation}
\label{krein0}
\lim_{n\to\infty} |F'(z_n)|^{-1}|z_n|^M = 0 \qquad \text{for any}\qquad M>0,
\end{equation}
and
\begin{equation}
\label{krein}
\frac{1}{F(z)} = \sum_n \frac{1}{F'(z_n)(z - z_n)}.  
\end{equation}
Note that when $\{z_n\}$ is power separated and \eqref{krein0} is satisfied,
one can replace \eqref{krein} by the condition that, for any $K>0$, 
$|F(z)|\gtrsim 1$ when $z\notin \cup_n D(z_n, (|z_n|+1)^{-K})$.

Now we state the description of spaces with localization property of type $2$;
localization of type $N$ can be described similarly (see \cite[Theorem 6.1]{abb}).

\begin{theorem}
 \label{type2}
The space $\mathcal{H}(T,A,\mu)$ has the localization property of
type $2$ if and only if there exists a partition $T=T_1\cup T_2$,
$T_1\cap T_2=\emptyset$, such that the following three conditions
hold:
\begin{enumerate}
\begin{item}
There exists an entire function $A_2$ in the generalized 
Hamburger--Krein class such that $\mathcal{Z}_{A_2}=T_2$;
\end{item}
\begin{item}
The polynomials belong to the space $L^2(T_2,{\mu }|_{T_2})$, 
are not dense there, but their 
closure is of finite codimension in $L^2(T_2,{\mu }|_{T_2})$.
\end{item}
\begin{item}
The polynomials belong to the space $L^2(T_1,\tilde{\mu})$ and are dense there,
where $\tilde{\mu}=\sum_{t_n\in T_1}\mu_n|A_2(t_n)|^2\delta_{t_n}$.
\end{item}
\end{enumerate}
{Moreover, $T_1$ and $T$ are the attraction sets for $\mathcal{H}(T,A,\mu)$.}
\end{theorem}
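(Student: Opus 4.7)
The plan is to prove both implications, building on the decomposition $\mathcal{H}(T,A,\mu) = \mathcal{H}_1 \oplus \mathcal{H}_2$ into the closed linear spans of the reproducing kernels at $T_1$ and $T_2$, together with Theorem \ref{strlocal} and Theorem \ref{ordering}.

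\textbf{Sufficiency.} Assume (i)--(iii). Use the partial fraction expansion from (i) to verify directly that $A_1 := A/A_2 \in \mathcal{H}(T,A,\mu)$: writing
\[
A_1(z) = A(z) \sum_{t_n \in T_2} \frac{1}{A_2'(t_n)(z - t_n)},
\]
the required $\ell^2$ summability of the coefficient sequence $\{1/(A_2'(t_n)\mu_n^{1/2})\}_{t_n \in T_2}$ follows by combining the super-polynomial decay \eqref{krein0} with the polynomial integrability in (ii). The same argument yields $A_1 \cdot p \in \mathcal{H}(T,A,\mu)$ for every polynomial $p$, and by construction $T_{A_1 \cdot p} = T_1$, so $T_1$ is an attraction set. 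For the dichotomy, each subspace $\mathcal{H}_j$ inherits a natural Cauchy--de Branges structure on $T_j$: $\mathcal{H}_2$ corresponds to $\mathcal{H}(T_2, A_2, \mu|_{T_2})$ and $\mathcal{H}_1$ to $\mathcal{H}(T_1, A_1, \tilde\mu)$, the $|A_2|^2$-weight arising because the squared norms of the reproducing kernels of $\mathcal{H}_1$ at $t_n \in T_1$ equal $|A_1'(t_n)|^2 \mu_n |A_2(t_n)|^2$. Condition (iii) thus gives strong localization of $\mathcal{H}_1$ via Theorem \ref{strlocal}, forcing every nonzero $f_1 \in \mathcal{H}_1$ to have $T_{f_1} = T$ (zeros near $T_1$ from the localization plus zeros at $T_2$ from the factor $A_2$). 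Condition (ii) says $\mathcal{H}_2$ has exactly two attraction sets $\emptyset$ and $T_2$, the former being the finite-codimension closure of polynomials, which corresponds in $\mathcal{H}(T,A,\mu)$ to the subspace of $\mathcal{H}_2$ whose nonzero elements have $T_f = T_1$. Combining, every nonzero $f = f_1 + f_2$ has $T_f = T_1$ or $T_f = T$, and both are realised.

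\textbf{Necessity.} Assume localization of type $2$ with attraction sets $T_1 \subsetneq T$; set $T_2 = T \setminus T_1$ and pick $f_0 \in \mathcal{H}(T,A,\mu)$ with $T_{f_0} = T_1$. Construct entire functions $A_1, A_2$ of finite order with $\mathcal{Z}_{A_j} = T_j$ by Weierstrass products. The crucial step is to show $A_2$ belongs to the generalized Hamburger--Krein class. Since $f_0$ has essentially no zeros near $T_2$, the meromorphic function $f_0/A_1$ is, modulo finitely many points, entire and bounded below near $T_2$ by power-separation lower bounds for $A_1$ away from $T_1$; together with the a priori upper bound $|f_0(z)| \lesssim |A(z)|/\dist(z, T)$ coming from $f_0 \in \mathcal{H}(T,A,\mu)$, this forces $f_0/A_1$ to behave essentially like a scalar multiple of $A_2$. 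A Phragm\'en--Lindel\"of comparison in sectors of $\mathbb{C}$ then yields $|A_2(z)| \gtrsim 1$ off small disks around $T_2$, which by power separation of $T_2$ is equivalent to \eqref{krein0}--\eqref{krein}. Once $A_2$ is in hand, (iii) becomes strong localization of the $T_1$-subspace of $\mathcal{H}(T,A,\mu)$ (witnessed by the family $\{A_1 \cdot p\}$), translated through Theorem \ref{strlocal} and the identification $\mathcal{H}_1 \cong \mathcal{H}(T_1, A_1, \tilde\mu)$ into polynomial density in $L^2(T_1, \tilde\mu)$; and (ii) is exactly what remains in order that $\mathcal{H}(T,A,\mu)$ have exactly two attraction sets: infinite codimension of the polynomial closure would produce a third attraction set by Theorem \ref{ordering}, while zero codimension (polynomial density) would collapse $T_1$ into $T$.

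\textbf{Main obstacle.} The hardest point is the necessity of the Hamburger--Krein property of $A_2$. In the real-line setting of \cite{abb} this was extracted using de Branges' Lemma, a deep Phragm\'en--Lindel\"of result in the half-plane; for general complex $t_n$ this tool is unavailable, and we must obtain the lower bound on $|A_2|$ by a direct quantitative comparison in $\mathbb{C}$, where the power separation \eqref{powsep} is the essential input controlling the Weierstrass products $A_1, A_2$ from below away from their zero sets.
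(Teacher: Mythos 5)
Your proposal has the right skeleton (split $T=T_1\cup T_2$, use $A_2$ of Hamburger--Krein type, reduce to polynomial density), but two of its load-bearing steps fail. First, in the sufficiency part you claim that $A_1\cdot p\in\mathcal{H}(T,A,\mu)$ for \emph{every} polynomial $p$. By the membership criterion (Theorem \ref{inc}(i)) this requires $\sum_{t_n\in T_2}|p(t_n)|^2|A_2'(t_n)|^{-2}\mu_n^{-1}<\infty$; conditions (i)--(ii) give upper bounds on $|A_2'(t_n)|^{-1}$ and on $\mu_n$, but $\mu_n$ sits in the \emph{denominator}, so nothing here is proved --- and in fact the statement is false: if $PA_1\in\mathcal{H}(T,A,\mu)$ for all polynomials $P$, then $A_1(z)\sum_k a_kz^k\in\mathcal{H}(T,A,\mu)$ for rapidly decaying $\{a_k\}$, and such functions cannot have their zeros localized near $T_1$ or $T$; this is exactly the paper's argument for the finite-codimension part of (ii) in the necessity direction. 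Second, your dichotomy for $\mathcal{H}_1\oplus\mathcal{H}_2$ rests on identifying $\mathcal{H}_1$ (the span of kernels at $T_1$) with $\mathcal{H}(T_1,A_1,\tilde\mu)$ and importing strong localization. That identification is a unitary sending kernels to kernels, but it is \emph{not} multiplication by a fixed function and does not preserve zero sets, so Theorem \ref{strlocal} applied to $L^2(T_1,\tilde\mu)$ says nothing directly about zeros of elements of $\mathcal{H}_1$. The actual link is the identity $A_2(z)\sum_{t_n\in T_1}\frac{c_n\mu_n^{1/2}}{z-t_n}=\sum_{t_n\in T_1}\frac{c_nA_2(t_n)\mu_n^{1/2}}{z-t_n}+H(z)$ with $H$ entire; one must prove $H\in\mathcal{H}_2$, write $f=A_1(g+H+F)$, and run Rouch\'e on small circles around $t_k$ using $|H+F|\gtrsim 1$ there. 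This correction term $H$ is the crux of both implications and of the ``combining'' step for $f=f_1+f_2$ (zeros of a sum are not read off from the summands); it is entirely absent from your argument.

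In the necessity direction the gaps are similar in kind. The Hamburger--Krein property of $A_2$ does not need any Phragm\'en--Lindel\"of comparison: Lemma \ref{TfLemma} produces a function $A_1\in\mathcal{H}(T,A,\mu)$ vanishing exactly on $T_1$, and then $1/A_2=A_1/A$ \emph{is} a Cauchy transform with $\ell^2$ data by definition of the space, which together with the superpolynomial decay of $\mu_n$ (Proposition \ref{mu_n}) gives \eqref{krein0}--\eqref{krein} at once; your sketch neither rules out a zero-free factor $e^{Q}$ in $f_0/A_1$ (this needs the argument of Corollary \ref{maxx}) nor establishes the interpolation formula \eqref{krein}. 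Finally, ``infinite codimension would produce a third attraction set by Theorem \ref{ordering}'' is not a deduction the ordering theorem supports: the finite-codimension claim is proved by showing that every sequence orthogonal to the polynomials in $L^2(T_2,\mu|_{T_2})$ yields a function of the form $PS$ with $S$ zero-free (because its zeros cannot be localized near $T_2$, by \eqref{or} versus \eqref{yui0}), and then bounding $\deg P$ by the argument quoted above.
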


In Section \ref{exx} we will give a number of examples 
of Cauchy--de Branges spaces having localization property of type $2$. 
\bigskip
 

\section{Equivalent forms of zeros localization \label{locsec}}
In this section we will prove Theorem \ref{local}.

A sequence $\{z_k\}\subset \CC$ will be said to be {\it lacunary} 
if $\inf_k |z_{k+1}|/|z_k| >1$. A zero genus canonical product over a lacunary 
sequence will be said to be a {\it lacunary canonical product}.

For $\Omega\subset \CC$, we define its {\it upper area density} by 
$$
D^+(\Omega) = \limsup_{R\to\infty} \frac{m_2(\Omega \cap D(0, R))}{\pi R^2},
$$
where $m_2$ denotes the area Lebesgue measure in $\CC$. If $D^+(\Omega) = 0$ we say that
$\Omega$ is a set of zero area density. 
We say that a set $E\subset\RR$ has zero linear density if 
$|E \cap (0, R)| = o(R)$, $R\to\infty$, where 
$|e|$ denotes one-dimensional Lebesgue measure of $e$.

The following result (\cite[Theorem 2.6]{abb1}) will play an important role in what follows. 
We will often need to verify that a certain entire function belongs 
to the space $\mathcal{H}(T,A,\mu)$.  
In the de Branges space setting a much stronger statement is given 
in \cite[Theorem 26]{br}. 

\begin{theorem}
\label{inc}
Let $\mathcal{H}(T,A,\mu)$ be a Cauchy--de Branges space
and let $A$ be of finite order. Then an entire function $f$ 
is in $\mathcal{H}(T,A,\mu)$ if and only if the following three conditions hold:
\begin{enumerate} 
\item [(i)] 
$\sum_n\dfrac{|f(t_n)|^2}{|A'(t_n)|^2 \mu_n} <\infty$\textup;      
\smallskip
\item [(ii)]
there exist a set $E\subset (0,\infty)$ 
of zero linear density and $N>0$ such that $|f(z)| \le |z|^N |A(z)|$,
$|z| \notin E$\textup;
\smallskip
\item [(iii)] there exists a set $\Omega$ of positive upper area density such that 
$|f(z)| = o(|A(z)|)$, $|z|\to \infty$, $z\in \Omega$.
\end{enumerate}
\end{theorem}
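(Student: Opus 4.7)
The plan is to prove the two directions separately, with sufficiency being the substantive half.

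For necessity, I write $f(z) = A(z)\sum_n a_n \mu_n^{1/2}/(z - t_n)$ with $\{a_n\}\in\ell^2$. A residue calculation at $t_n$ gives $f(t_n) = a_n A'(t_n)\mu_n^{1/2}$, so (i) reduces to $\|a\|_{\ell^2}^2 < \infty$. For (ii) and (iii), I apply Cauchy--Schwarz:
$$
\bigg|\frac{f(z)}{A(z)}\bigg|^2 \le \|a\|_{\ell^2}^2 \sum_n \frac{\mu_n}{|z-t_n|^2}.
$$
Splitting the right-hand sum according to whether $|t_n|\le |z|/2$ or not, and using $\sum_n \mu_n/(|t_n|^2+1)<\infty$ together with the power separation of $T$, I obtain $|f(z)/A(z)| \le |z|^N$ on the complement of a thin $|t_n|^{-M}$-neighborhood of $T$; the radii of this neighborhood have total Lebesgue measure $\lesssim \sum_n |t_n|^{-M}<\infty$, hence zero linear density, giving (ii). Condition (iii) follows from a tail estimate: truncating the series at level $R$ and keeping $z$ moderately far from each $t_n$ makes the tail of $\sum\mu_n/|z-t_n|^2$ arbitrarily small on a set $\Omega$ of full upper area density.

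For sufficiency, the natural candidate is $a_n := f(t_n)/\bigl(A'(t_n)\mu_n^{1/2}\bigr)$, which is $\ell^2$ by (i). Set
$$
g(z) := \sum_n \frac{f(t_n)}{A'(t_n)}\cdot\frac{A(z)}{z-t_n};
$$
then $g\in\he$ and $g(t_n)=f(t_n)$ for every $n$, so $h:=f-g$ is entire, vanishes on $T$, and factors as $h = A q$ with $q$ entire. It remains to show $q\equiv 0$.

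This is where (ii) and (iii) enter. The necessity bounds applied to $g$ combined with (ii) for $f$ yield $|q(z)| \le C|z|^{N'}$ for $|z|$ outside some set $E'$ of zero linear density. Since $E'$ is nowhere dense at infinity, every radius $r$ can be approximated from above by some $r'\notin E'$; applying the maximum modulus principle on $|z|\le r'$ upgrades the bound to hold globally, so $q$ is a polynomial of degree at most $N'$. Finally, (iii) together with the necessity estimate $|g(z)|/|A(z)| = o(1)$ on a positive-density set outside $T$ forces $|q(z)| = o(1)$ as $z\to\infty$ through $\Omega$. Since $\Omega$ is unbounded, a nonzero polynomial cannot decay on it, hence $q\equiv 0$ and $f = g\in\he$. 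The main obstacle I anticipate is the bookkeeping in the necessity estimates for $g$: one must produce a polynomial majorant $|z|^{N''}|A(z)|$ off a set of zero linear density \emph{and} a matching $o(|A(z)|)$ bound on a set of positive upper area density, with both sets compatible with the corresponding sets for $f$ in (ii) and (iii) so that the subtraction $f - g = Aq$ inherits both estimates.
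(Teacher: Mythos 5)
This theorem is not proved in the paper at all: it is imported verbatim from \cite[Theorem 2.6]{abb1}, so there is no in-house argument to compare yours against. That said, your proposal follows what is essentially the standard (and, as far as I can tell, the cited source's) strategy, and the overall architecture is sound: necessity by the residue computation $f(t_n)=a_n\mu_n^{1/2}A'(t_n)$ plus Cauchy--Schwarz estimates on $\sum_n\mu_n|z-t_n|^{-2}$; sufficiency by interpolating $f$ on $T$ with $g\in\he$, writing $f-g=Aq$, promoting the off-$E'$ polynomial bound on $q$ to a global one via maximum modulus on nearby good circles (which exist since $E\cup E'$ has zero linear density), and then killing the polynomial $q$ with the decay from (iii). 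You also correctly identify the genuinely delicate point, namely that the two density sets must be made compatible.

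Two remarks on where the real work hides. First, for the final Liouville step you need $|g(z)|/|A(z)|=o(1)$ on a set whose intersection with the \emph{given} $\Omega$ of positive upper density is unbounded; this forces you to prove the necessity of (iii) in the strong form --- $o(|A(z)|)$ \emph{outside a set of zero area density} --- exactly the strengthening the paper records in the remark following the theorem. A bound merely ``on some set of positive upper density'' would not suffice, since two positive-density sets can be disjoint. Second, your one-line justification of that strong form (``truncating the series at level $R$ \dots makes the tail arbitrarily small on a set of full upper area density'') is the thinnest point of the write-up: the measure $\sum_n a_n\mu_n^{1/2}\delta_{t_n}$ need not be finite, so the quoted standard fact about planar Cauchy transforms of finite measures does not apply verbatim, and after Cauchy--Schwarz one must show that $\sum_n \mu_n|z-t_n|^{-2}$ stays bounded (and its tail small) off a set of zero area density --- an area-integral estimate that deserves to be written out rather than asserted. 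With that step supplied, the proof is complete.
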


Condition (iii) can be replaced by a stronger conditions that 
$|f(z)| = o(|A(z)|)$ as $|z|\to \infty$ outside a set of zero area density.
It should be mentioned that (iii) is a consequence
of the following standard fact about planar Cauchy transforms
(see, e.g., \cite[Proof of Lemma 4.3]{bbb-fock}). If $\nu$ is a finite complex Borel measure
in $\CC$, then, for any $\varepsilon>0$, 
there exists a set $\Omega$ of zero area density such that
$$
\bigg|\int_\mathbb{C}\frac{d\nu(\xi)}{z-\xi} - \frac{\nu(\mathbb{C})}{z} \bigg|
< \frac{\varepsilon}{|z|}, \qquad  z\in\CC\setminus\Omega.
$$
We will frequently use the following corollary of this fact:
if $\nu$ is orthogonal to all polynomials (meaning that $\int |\xi|^k d|\nu|(\xi)<\infty$ 
and $\int \xi^k d\nu(\xi) =0$, $k\in \mathbb{Z}_+$), then for any $M>0$ we have
\begin{equation}
\label{or}
\bigg|\int_\mathbb{C}\frac{d\nu(\xi)}{z-\xi}\bigg| = o(|z|^{-M})
\end{equation}
as $|z|\to \infty$ outside some set of zero area density. 
\medskip

\begin{proof}[Proof of Theorem \ref{local}]
It is obvious that the localization property implies each of the conditions
(ii) and (iv). We will show that (ii)$\Longrightarrow$(iii),
(iii)$\Longrightarrow$(iv), and
(iii) \& (iv) \bigskip         
$\Longrightarrow$(i).

{\bf (ii) $\Longrightarrow$ (iii)}.
Assume that (iii) is not true. Then
for some $M>0$ there exists a nonzero function $F \in \mathcal{H}$
for which there exists an infinite number of zeros
$z\in \mathcal{Z}_F$ with $\dist(z, T)\ge |z|^{-M}$. 

Let $S$ be an unbounded set which satisfies (ii).
Then we can choose two sequences $s_k\in S$
and $z_k \in \mathcal{Z}_F$ such that $2|z_k| \le |s_k| \le |z_{k+1}|/2$
and $\dist(z_k, T)\ge  |z_k|^{-M}$. Now put
$$
H(z) = F(z) \prod_k \frac{1-z/s_k}{1-z/z_k}.
$$
A simple estimate of the lacunary infinite products implies that $|H(z)| 
\lesssim |z|^{M+1}|F(z)|$ for $|z|\ge 1$
and $\dist(z, \{z_k\})\ge |z_k|^{-M}/2$, in particular, for $z\in T$.
Now dividing $H$ by some polynomial $P$ of degree  $M+1$
with $\mathcal{Z}_F \subset \mathcal{Z}_F \setminus\{z_k\}$,
we conclude by Theorem \ref{inc}  that $\tilde H = H/P$ is in $\mathcal{H}$.
This contradicts (ii) since $\mathcal{Z}_{\tilde H} \cap S$
is  an infinite set.   
\bigskip

{\bf (iii) $\Longrightarrow $(iv)} 
Assume that (iv) is not true. Then there exist a nonzero 
function $F\in\he$ and a sequence $\{z_n\}$ of its multiple zeros. 
Without loss of generality we may assume that the sequence $z_k$ is lacunary. 
By (iii) there exists a sequence $n_k$ such that $|z_k - t_{n_k}| =o(|t_{n_k}|^{-K})$ 
for any $K>0$ as $k\to \infty$. Now put 
$$
\tilde F(z) = F(z) \prod_k \frac{(z-t_{n_k} - |t_{n_k}|^{-M})(z-t_{n_k})}{(z-z_k)^2}
$$
for some sufficiently large $M>N$, where $N$ is the constant in \eqref{powsep}.
It is easy to see that $|\tilde F(z)|\lesssim |F(z)|$ when $z\notin \cup_n D(t_n,
C|t_n|^{-N}/2)$ and so $\tilde F$ is in $\he$ by Theorem \ref{inc}.
\bigskip

{\bf (iii) \& (iv) $\Longrightarrow$ (i)}. 
Let $F$ be a nonzero function in $\he$. By (iii), all zeros of $F$, 
except a finite number, are localized in the disks $D(t_n, |t_n|^{-M})$ for any fixed
$M$. Assume that an infinite subsequence of disks $D(t_{n_k}, |t_{n_k}|^{-M})$ 
(where $t_{n_k}$ is a lacunary sequence) contains two zeros $z_k$, $\tilde z_k$ of $F$.
Then the function
$$
\tilde F(z) = F(z) \prod_k\frac{(z-t_{n_k})^2}{(z-z_k)(z-\tilde z_k)}
$$
is in $\he$ by Theorem \ref{inc}, a contradiction with (iv). 
\end{proof}
\bigskip


\section{Localization and polynomial density}

This section is devoted to the proof of Theorem \ref{strlocal}.
In Subsection \ref{pd1} we show that 
the polynomial density implies the strong localization property. 
In Subsection \ref{pd2} we will prove the converse statement.

First of all we prove that  the localization property implies that $\mu_n$ decrease superpolynomially.

\begin{proposition} 
\label{mu_n}
Let $\mathcal{H}(T,A,\mu)$ have the localization property. Then for any $M>0$
we have $\mu_n\lesssim |t_n|^{-M}$.
\end{proposition}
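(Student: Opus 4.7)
The plan is to argue by contradiction using condition (iii) of Theorem~\ref{local}. Suppose the estimate $\mu_n\lesssim|t_n|^{-M}$ fails for some $M>0$; by enlarging $M$ we may assume $M>N$, with $N$ the power separation exponent from \eqref{powsep}. Then there is an infinite subsequence $\{n_k\}$ with $\mu_{n_k}\ge|t_{n_k}|^{-M}$, and after thinning we may also assume $\{t_{n_k}\}$ is lacunary. The idea is to exhibit a single $f\in\mathcal{H}(T,A,\mu)$ with infinitely many zeros $w_k$ placed at distance $|t_{n_k}|^{-M}$ from $t_{n_k}$; these zeros escape every disk $D(t_m,|t_m|^{-M-1})$ by power separation, contradicting Theorem~\ref{local}(iii) applied with level $M+1$.

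The construction is an explicit infinite product. Fix an index $n_0\notin\{n_k\}_{k\ge 1}$, set $\varepsilon_k:=|t_{n_k}|^{-M}$, $w_k:=t_{n_k}+\varepsilon_k$, and put
\[
f(z)\;:=\;\frac{A(z)}{z-t_{n_0}}\,\prod_{k\ge 1}\frac{z-w_k}{z-t_{n_k}}.
\]
Lacunarity yields $\sum_k\varepsilon_k/|t_{n_k}|<\infty$, so the product converges to a meromorphic function whose simple poles at the $t_{n_k}$ are cancelled by the corresponding zeros of $A$; thus $f$ is entire of finite order with zero set $(T\setminus\{t_{n_0},t_{n_1},\ldots\})\cup\{w_k:k\ge 1\}$. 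Membership in $\mathcal{H}(T,A,\mu)$ is checked against Theorem~\ref{inc}. For condition (i), note $f(t_l)=0$ for $l\notin\{n_0,n_1,\ldots\}$, the $l=n_0$ term contributes a single finite value, and for $j\ge 1$, using $|t_{n_j}-t_{n_0}|\asymp|t_{n_j}|$, a uniformly bounded side product $\prod_{k\ne j}|t_{n_j}-w_k|^2/|t_{n_j}-t_{n_k}|^2$ (since $\varepsilon_k\ll|t_{n_j}-t_{n_k}|$), and the hypothesis $\mu_{n_j}\ge|t_{n_j}|^{-M}$, one obtains
\[
\frac{|f(t_{n_j})|^2}{|A'(t_{n_j})|^2\,\mu_{n_j}}\;\lesssim\;\frac{\varepsilon_j^2}{|t_{n_j}|^2\,\mu_{n_j}}\;\le\;|t_{n_j}|^{-M-2},
\]
summable by lacunarity. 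Conditions (ii) and (iii) follow from the estimate $|f(z)/A(z)|\lesssim 1/|z|$ for $z$ outside a union of tiny disks around the $t_{n_k}$, whose modulus projection has total length $\lesssim\sum_k|t_{n_k}|^{-M}$ and whose complement has full area density.

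Finally, each zero $w_k$ satisfies $|w_k-t_{n_k}|=|t_{n_k}|^{-M}>|t_{n_k}|^{-M-1}$, so $w_k\notin D(t_{n_k},|t_{n_k}|^{-M-1})$; for $m\ne n_k$, power separation together with $M>N$ gives $|w_k-t_m|\ge C|t_{n_k}|^{-N}/2>|t_m|^{-M-1}$ in each of the three regimes ($|t_m|$ comparable to, much larger than, or much smaller than $|t_{n_k}|$). Hence infinitely many zeros of $f$ lie outside $\bigcup_n D(t_n,|t_n|^{-M-1})$, violating Theorem~\ref{local}(iii). The main obstacle is the calibration of $\varepsilon_k$: it must be small enough for the weighted $\ell^2$ condition of Theorem~\ref{inc}(i) to hold, yet large enough for the new zeros to escape every localization disk. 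The choice $\varepsilon_k=|t_{n_k}|^{-M}$ works precisely because the hypothetical lower bound $\mu_{n_k}\ge|t_{n_k}|^{-M}$ provides exactly this margin.
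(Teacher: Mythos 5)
Your proof is correct, and it shares the paper's overall strategy: argue by contradiction, extract a lacunary subsequence with $\mu_{n_k}\ge|t_{n_k}|^{-M}$, build an explicit infinite product, and verify membership in $\he$ via Theorem~\ref{inc}, the point being that the assumed lower bound on $\mu_{n_k}$ is exactly what makes condition (i) of that theorem summable. Where you diverge is in the construction and in which clause of Theorem~\ref{local} gets contradicted. The paper takes $f=AU^3\prod_k(1-z/t_{n_k})^{-1}$ with $U$ a lacunary product over the sparser subsequence $t_{n_{10k}}$; this function has infinitely many triple zeros and so violates clause (iv), and since $|f(t_{n_k})|\lesssim|t_{n_k}|^{-K}|A'(t_{n_k})|$ for every $K$, condition (i) of Theorem~\ref{inc} holds with enormous slack. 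You instead displace each zero $t_{n_k}$ to $w_k=t_{n_k}+|t_{n_k}|^{-M}$ and violate clause (iii); your calibration is tight, since $|f(t_{n_j})|/|A'(t_{n_j})|\asymp\varepsilon_j/|t_{n_j}|$ is only just absorbed by $\mu_{n_j}\ge|t_{n_j}|^{-M}$, which makes transparent why the threshold $|t_n|^{-M}$ is the right one. Both routes are legitimate. One small presentational slip: in your escape argument the displayed chain $|w_k-t_m|\ge C|t_{n_k}|^{-N}/2>|t_m|^{-M-1}$ fails in the regime $|t_m|\ll|t_{n_k}|$, where the right-hand side can be as large as $2^{-M-1}$ while the middle term is tiny; there one should instead use $|w_k-t_m|\ge|t_{n_k}|-|t_m|-\varepsilon_k\gtrsim|t_{n_k}|$, as your listing of the three regimes implicitly acknowledges. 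This is cosmetic and does not affect the validity of the argument.
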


\begin{proof}
Assume the converse. Then there exists $M>0$ and an 
infinite subsequence $\{n_k\}$ such that $\mu_{n_k}\geq |t_{n_k}|^{-M}$.
Without loss of generality we can assume that $\{t_{n_k}\}$ is lacunary. 
Let $U$ be the lacunary product with zeros $t_{n_{10k}}$. Put  
$$
f(z) = A(z)U^3(z)\prod_{k}\biggl{(}1-\frac{z}{t_{n_k}}\biggr{)}^{-1}.
$$
Then, by simple estimates of lacunary canonical products, we have 
$|f(t_{n_k})| \lesssim |t_{n_k}|^{-K} |A'(t_{n_k})|$  for any fixed $K>0$, 
whence $f$ satisfies condition (i) of Theorem \ref{inc}. 
Since $\frac{A(z)}{z-t_{n_1}} \in \he$ and 
$|U(z)|^3 \lesssim |z|^{-K} \prod_{k} |1-z/t_{n_k}|$
for any $K>0$ and $z\notin \cup_k D(t_{n_k}, 1)$, we conclude that $f$ satisfies 
conditions (ii) and (iii) of Theorem \ref{inc} and so $f\in \he$. 
This contradicts the property (iv) from Theorem \ref{local}.
\end{proof}

\subsection{Polynomial Density $\Longrightarrow$ Strong Localization Property\label{pd1}}
Let $f\in\mathcal{H}(T,A,\mu)\setminus\{0\}$. 
If the polynomials are dense in $L^2(\mu)$, then it is not difficult to show
that for any $M>0$ there exist $L>0$ and $R>0$ such that 
\begin{equation}
\inf\{|z|^L|f(z)|: \dist(z,T)\geq|z|^{-M}, |z|>R\}>0.
\label{LReq}
\end{equation} 
A simple proof of this fact is given in detail in 
\cite[Section 3.1]{abb} and we omit it. 

In particular, it follows from \eqref{LReq} that for any $M>0$ all zeros 
of $f\in\mathcal{H}(T,A,\mu)\setminus\{0\}$ except, 
may be, a finite number, are in $\cup_n D(t_n,|t_n|^{-M})$. 
Therefore by Theorem \ref{local}, the space $\mathcal{H}(T,A,\mu)$ 
has the localization property, and so any disc $D(t_n,|t_n|^{-M})$ 
except a finite number contains at most one zero of $f$.

Now we show that the disk $D(t_k,|t_k|^{-M})$ contains 
exactly one point of $\mathcal{Z}_f$ if $|k|$ is sufficiently large. 
Let 
$$
f(z) = \sum_n \frac{d_n\mu_n^{1/2}}{z-t_n}, \qquad
g(z) = \sum_{n\neq k}\frac{d_n\mu_n^{1/2}}{z-t_n}.
$$
Recall that $|f(z)|\geq c|z|^{-L}$ for $|z-t_k|=|t_k|^{-M}$ and sufficiently large 
$k$, where $L$ is the number from \eqref{LReq}.
Since $\mu_k=o(|t_k|^{-\tilde{L}})$, $k\rightarrow\infty$, 
for any $\tilde{L}>0$, we conclude that $|f(z)-g(z)|<c|z|^{-L}/2$
for $|z-t_k|=|t_k|^{-M}$, $k\geq k_0$.

Put $F=Af$, $G=Ag$. Then $F$, $G$ are entire and 
$|F-G|<|G|$ on $|z-t_k|=|t_k|^{-M}$, $k\geq k_0$. 
By the Rouch\'{e} theorem, $F$ and $G$ have the same 
number of zeros in $D(t_k,|t_k|^{-M})$, $k\geq k_0$. 
Since $G(t_k)=0$, we conclude that $F=Af$ has a zero in 
$D(t_k,|t_k|^{-M})$, $|k|\geq k_0$. The strong localization property is proved.

\subsection{Strong Localization $\Longrightarrow$ Polynomial
Density.\label{pd2}} This implication is almost trivial. Let $\{u_n\} \in
\ell^2$ be a nonzero sequence such that $\sum_{n}  u_n t_n^k
\mu_n^{1/2} = 0$ for any $k\in \mathbb{N}_0$. Consider the
function
$$
F(z) = A(z) \sum_n \frac{u_n \mu_n^{1/2} }{z-t_n}.
$$
Then $F$ belongs to the Cauchy--de Branges space $\mathcal{H}(T,A,\mu)$ 
and since all the moments of $u_n$ are zero, it is
easy to see that for any $K>0$, 
$|F(z)/A(z)|=  o(|z|^{-K})$ as $|z|\to\infty$ and 
$z\notin \cup_n D(t_n, C|t_n|^{-N}/2)$, where $C,N$ are parameters from \eqref{powsep}.
On the other hand, since we have the strong localization property,
for any $M>0$ all but a finite number of zeros of $f$ lie in
$\cup_n D(t_n, r_n)$, where $r_n = |t_n|^{-M}$ and
$\#\bigl{(} \mathcal{Z}_f\cap D(t_n,r_n)\bigr{)} \leq 1$ for all indices $n$ except,
possibly, a finite number. 

Let $T_1$ be the set of those $t_n$ for which the corresponding 
disk $D(t_n,r_n)$ contains exactly one zero of $F$ (denoted by $z_n$
with the same index $n$)
and let $A=A_1A_2$ be the corresponding factorization of $A$,
where $A_2$ is a polynomial with finite zero set $T\setminus T_1$. 
Put
$$
F_1(z) = A_1(z)\prod_{t_n \in T_1} \frac{z-z_n}{z- t_n}.
$$
We can choose $M$ to be so large that the above product converges, and, moreover, 
$|F_1(z)|\asymp |A_1(z)|$
when $\dist(z, T_1) \ge C|z|^{-N}/2$. Then we can write $F = F_1F_2$, and it is easy to 
see that in this case $F_2$ is at most a polynomial. 
Thus, for some $L>0$, we have $|F(z)|/|A(z)| \gtrsim |z|^{-L}$, 
as $|z|\to\infty$ and $z\notin \cup_n D(t_n, C|t_n|^{-N}/2)$, a contradiction.
\qed
\bigskip


\section{Ordering theorem for the zeros of Cauchy transforms}

First we show that in the proof of ordering
for attraction sets one can consider only functions with zeros
in $T$.

\begin{lemma}
\label{TfLemma}
Let $f\in\he$, $f\neq0$, and let $T_f$ be defined as in Subsection
\ref{AS}. Then there exists a function $A_f\in\he$ which vanishes
exactly  on $T_f$ up to a finite set. 
\end{lemma}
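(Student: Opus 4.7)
The plan is to produce $A_f$ by \emph{shifting} each zero of $f$ that lies close to a point $t_n \in T_f$ onto $t_n$ itself. By Theorem \ref{local}(iii) and the definition of $T_f$ from Subsection \ref{AS}, we may choose $M$ so large that $\sum_n |t_n|^{-M} < \infty$, and an index $n_0$ with the following properties: for every $t_n \in T_f$ with $n \ge n_0$, the disk $D(t_n, |t_n|^{-M})$ contains exactly one zero $z_n$ of $f$; for every $t_n \in T \setminus T_f$ with $n \ge n_0$, the disk $D(t_n, |t_n|^{-M})$ contains no zero of $f$; and outside $\bigcup_{n \ge n_0} D(t_n, |t_n|^{-M})$ the function $f$ has only finitely many zeros. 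With these choices, set
\[
A_f(z) \;:=\; f(z) \prod_{\substack{t_n \in T_f \\ n \ge n_0}} \frac{z - t_n}{z - z_n}.
\]

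First I would verify that the product converges and that $A_f$ extends to an entire function. Writing $(z - t_n)/(z - z_n) = 1 + (z_n - t_n)/(z - z_n)$ and using $|z_n - t_n| \le |t_n|^{-M}$ together with $\sum_n |t_n|^{-M} < \infty$, the product converges uniformly on compact subsets of $\CC \setminus \{z_n\}_{n \ge n_0}$. The simple pole at each $z_n$ is cancelled by the corresponding simple zero of $f$, so $A_f$ is entire.

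Next I would check conditions (i)--(iii) of Theorem \ref{inc} for $A_f$. Because $T$ is power separated with constant $N$ in \eqref{powsep} and $M$ is chosen much larger than $N$, one has $|t_m - z_n| \ge \tfrac{1}{2}|t_m - t_n|$ for all $m \ne n$ with $n \ge n_0$, so each factor $(t_m - t_n)/(t_m - z_n)$ equals $1 + O(|t_n|^{-M+N})$ and the resulting infinite product is uniformly bounded in $m$. Hence $|A_f(t_m)| \lesssim |f(t_m)|$ for $t_m \notin T_f$, while $A_f(t_m) = 0$ for $t_m \in T_f$ with $m \ge n_0$, giving (i) from $f \in \he$. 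The same estimate yields $|A_f(z)| \asymp |f(z)|$ for $z \notin \bigcup_n D(t_n, C|t_n|^{-N}/2)$, so the growth bound (ii) and the smallness of $|A_f(z)|/|A(z)|$ on a set of positive upper area density (iii) transfer directly from $f$ to $A_f$. Therefore $A_f \in \he$.

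Finally I would identify the zero set. By construction $A_f$ has a simple zero at each $t_n \in T_f$ with $n \ge n_0$, while the zeros $z_n$ of $f$ clustered near $T_f$ are removed. For $t_n \in T \setminus T_f$ with $n \ge n_0$ the disk $D(t_n, |t_n|^{-M})$ contains no zero of $f$, so $f(t_n) \ne 0$, and the uniformly bounded product gives $A_f(t_n) \ne 0$. Since $f$ has at most finitely many further zeros, the same is true for $A_f$, so $\mathcal{Z}_{A_f} = T_f$ up to a finite set. The main technical point is the uniform control of the infinite correction product in all three conditions of Theorem \ref{inc}; this rests on the power separation of $T$ and on $|t_n - z_n|$ being superpolynomially small, both of which follow from our choice of $M$.
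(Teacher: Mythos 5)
Your proposal is correct and follows essentially the same route as the paper: multiply $f$ by the correction product $\prod_{t_n\in T_f}\frac{z-t_n}{z-z_n}$, use the power separation of $T$ together with the superpolynomial bound $|z_n-t_n|\le|t_n|^{-M}$ to get $|A_f|\asymp|f|$ away from $T$ and $|A_f(t_m)|\asymp|f(t_m)|$ on $T\setminus T_f$, and conclude via Theorem \ref{inc}. The only (immaterial) difference is that you truncate at an index $n_0$ where the paper instead discards a finite exceptional set by redefining $T_f$ up to finite sets.
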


\begin{proof}
Let $z_n$ be a zero of $f$ closest to the point $t_n\in T_f$.
Since $T_f$ is defined up to finite sets, we may 
assume without loss of generality  that this is a one-to-one
correspondence between $\mathcal{Z}_f$ and $T_f$. Put
$$
A_f(z)=f(z)\prod_{t_n\in T_f}\frac{z-t_n}{z-z_n}.
$$

Since we have $|z_n-t_n|\leq|t_n|^{-M}$ with $M$ much larger than
$N$ from the power separation condition \eqref{powsep}, 
it is easy to see that $|A_f(z)|\asymp|f(z)|$, ${\rm dist}\,(z, T) \ge C|t_n|^{-N}/2$, 
and
$|A_f(t_n)|\asymp|f(t_n)|$, $t_n\in T\setminus T_f$. 
Hence, $A_f\in\he$ by Theorem \ref{inc}. 
\end{proof}                                                      

\begin{corollary}
\label{maxx}
Let $\he$ have the localization property and assume that 
the zeros of a function $f\in\he$ are localized near the whole set $T$ 
up to a finite set. Then for any $K>0$ there exist $c, M>0$ such that
for the discs $D_k = D(t_k, |t_k|^{-K})$, $t_k\in T$, we have
\begin{equation}
\label{yui0}
|f(z)|\ge c|z|^{-M} |A(z)|, \qquad z\notin \cup_k D_k.
\end{equation}  
\end{corollary}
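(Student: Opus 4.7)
\emph{Setup.} The plan is to factor $f = A_f \cdot P$, where $A_f \in \he$ vanishes on $T$ up to a finite set and $P$ is a product bounded above and below off the small discs, and then to show that $A_f/A$ is a rational function $R$ whose poles lie in $T$. Applying Lemma~\ref{TfLemma} with $T_f = T$ up to a finite set produces $A_f \in \he$ given explicitly by $A_f(z) = f(z)\prod_n \frac{z-t_n}{z-z_n}$, the product indexed by a bijective matching (up to finite exceptions) of the zeros of $f$ with the corresponding $t_n \in T$. Setting $P(z) = \prod_n \frac{z-z_n}{z-t_n}$, one has $f = A_f \cdot P$.

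\emph{Uniform estimate for $P$.} By the localization property, one can take $|z_n - t_n| \le |t_n|^{-M_0}$ with $M_0$ arbitrarily large. To show $|P(z)| \asymp 1$ on $\CC \setminus \cup_k D_k$, I would estimate $\sum_n \frac{|z_n - t_n|}{|z - t_n|}$ by splitting it into $|t_n| \le 2|z|$ (where $|z - t_n| \ge |t_n|^{-K}$ gives summands $\le |t_n|^{-(M_0 - K)}$, summable over the power-separated sequence once $N$ in \eqref{powsep} is fixed so that $\sum_n |t_n|^{-N} < \infty$) and $|t_n| > 2|z|$ (where $|z - t_n| \ge |t_n|/2$ gives summands $\lesssim |t_n|^{-M_0 - 1}$). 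Both sums converge uniformly in $z$, so $|P(z)| \asymp 1$ and hence $|f(z)| \asymp |A_f(z)|$ on $\CC \setminus \cup_k D_k$.

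\emph{Rationality of $A_f/A$ and conclusion.} Since both $A$ and $A_f$ vanish on $T$ up to finite sets, $A_f/A$ is meromorphic on $\CC$ with only finitely many poles (at the $t_n \in T$ where $A_f$ does not vanish; these lie in $\cup_k D_k$) and finitely many zeros (the finitely many rogue zeros of $f$). Theorem~\ref{inc}(ii) applied to $A_f$ gives $|A_f(z)/A(z)| \le |z|^N$ for $|z| \notin E$, with $E$ of zero linear density. Multiplying by a polynomial $Q$ clearing these poles produces an entire $\psi := (A_f/A) \cdot Q$ with $|\psi(z)| \le C|z|^{N + \deg Q}$ on each circle $|z| = R$, $R \notin E$. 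Since $E$ is thin, for each large $R$ one can choose $R' \in [R, 2R] \setminus E$ and invoke the maximum modulus principle on $|z| \le R'$ to extend the bound; hence $\psi$ is a polynomial and $A_f/A$ is a rational function. Its poles lie in $\cup_k D_k$ and its finitely many zeros are bounded, so for $|z|$ large and $z \notin \cup_k D_k$ we obtain $|A_f(z)/A(z)| \ge c|z|^{-M}$, and combining with the previous paragraph yields \eqref{yui0}. I expect the main obstacle to be exactly this rationality step --- upgrading the polynomial bound for $\psi$ from the thin set $\{|z| \notin E\}$ to all of $\CC$ via maximum modulus, crucially using that $E$ has zero linear density. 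Additional bookkeeping is needed for the finite exceptional sets (rogue zeros of $f$ and unmatched $t_n \in T$), but these contribute only bounded rational corrections.
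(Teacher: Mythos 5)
Your proof is correct, and it follows the same skeleton as the paper's (reduce to $A_f$ via Lemma~\ref{TfLemma}, check $|f|\asymp|A_f|$ off the discs, show $A_f/A$ is rational, read off the power bound), but the crucial sub-step --- ruling out a transcendental factor in $A_f/A$ --- is handled by a genuinely different mechanism. The paper invokes Hadamard factorization to write $A_f = A P Q^{-1} e^{R}$ with $P,Q,R$ polynomials, and then kills a non-constant $R$ by observing that $A_f - A/(z-t_n)\in\he$ would otherwise have infinitely many zeros far from $T$, contradicting localization. You instead extract the necessary growth condition (ii) of Theorem~\ref{inc} for $A_f\in\he$, clear the finitely many poles by a polynomial $Q$, and upgrade the bound $|\psi(z)|\lesssim |z|^{N+\deg Q}$ from circles $|z|=R$ with $R\notin E$ to all of $\CC$ by choosing good radii in $[R,2R]$ (possible since $E$ has zero linear density) and applying the maximum principle; this forces $\psi$ to be a polynomial without ever appealing to Hadamard or to the localization property at that stage. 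Your route is slightly more self-contained and quantitative (it produces the degree bound $N+\deg Q$ directly), at the cost of leaning on the ``only if'' direction of Theorem~\ref{inc}; the paper's route is shorter once one accepts finite order of all elements of $\he$. One shared imprecision, which you already flag: if $f$ has finitely many ``rogue'' zeros outside $\cup_k D_k$, then \eqref{yui0} can only hold for $|z|$ large (or after excising finitely many small discs), and both proofs silently absorb this into the finite exceptional set; this is harmless for the way the corollary is used later.
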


\begin{proof}
Let $A_f$ be a function constructed from $f$ as in Lemma \ref{TfLemma}.
Since the zero set $T_f$ of $A_f$ differs from $T$ by a finite set and all functions in $\he$
are of finite order, we can write
$A_f = A P Q^{-1} e^{R}$, where $P, Q, R$ are some polynomials. 
Let us show that $R$ is a constant. Indeed, since $A/(z-t_n) \in \he$ 
for any $t_n\in T$, we have 
$$
A_f - \frac{A}{z-t_n} = A \bigg(\frac{P}{Q}\, e^{R} - \frac{1}{z-t_n} \bigg) \in \he. 
$$
If $R\ne const$, then the function in brackets will have infinitely many zeros,
a contradiction to localization.

As mentioned above, 
$|A_f(z)|\asymp|f(z)|$, ${\rm dist}\,(z, T) \ge C|t_n|^{-N}/2$, 
where $C, N$  are constants from power separation condition \eqref{powsep}.
This implies \eqref{yui0}.
\end{proof}                                                      
\medskip

Now we pass to the proof of Theorem \ref{ordering}.
By Lemma \ref{TfLemma} we may assume, in what follows, that
$f=A_1$, $g=\tilde{A_1}$, where $\mathcal{Z}_{A_1},
\mathcal{Z}_{\tilde{A_1}}\subset T$. Thus, we may write
$A=A_1A_2=\tilde{A_1}\tilde{A_2}$ for some entire functions $A_2$
and $\tilde{A_2}$.

Let $A_1=BA_0$, $\tilde{A_1}=\tilde{B}A_0$, where $B$ and
$\tilde{B}$ have no common zeros. To prove Theorem \ref{ordering},
we need to show that either $B$ or $\tilde{B}$ has finite number
of zeros.

Note that $A_1 - \alpha \tilde{A_1}$ is in $\mathcal{H}(T, A, \mu)$
for any $\alpha\in \mathbb{C}$. 
Therefore, the zeros of $B - \alpha \tilde B$ are localized near $T$.
As we will see, this is a very strong restriction which cannot hold unless
one of the functions $B$ or $\tilde{B}$ has finite number of zeros.



\subsection{Key proposition} The following proposition is the crucial step 
of the argument. In \cite{abb} a similar statement was proved using a deep result
of de Branges \cite[Lemma 7]{br}; it was valid even without localization 
assumption. This argument is no longer applicable in non-de Brangean case 
when $t_n$ are nonreal. However, taking into account the localization property,
one can give an elementary proof in the general case.                            

\begin{proposition}
\label{ordlemma}
If the functions $B$ and $\tilde B$ defined above have infinitely many zeros, then 
there exists $M>0$ such that at least one of the following two statements holds\textup: 

\begin{enumerate}
\begin{item}
there exists a subsequence $\{t_{n_k}\}\subset \mathcal{Z}_B$ such
that $|B'(t_{n_k})|\leq 8|t_{n_k}|^M|\tilde{B}(t_{n_k})|$\textup;
\end{item}

\begin{item}
there exists a subsequence
$\{t_{n_k}\}\subset\mathcal{Z}_{\tilde{B}}$ such that
$|\tilde{B}'(t_{n_k})|\leq 8|t_{n_k}|^M|B(t_{n_k})|$.
\end{item}
\end{enumerate}
\end{proposition}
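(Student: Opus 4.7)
The proof should proceed by contradiction. Assume that for every $M>0$ both (i) and (ii) fail; equivalently, for each such $M$, all but finitely many $t_n \in \mathcal{Z}_B$ satisfy $|\tilde B(t_n)| < \frac{1}{8}|t_n|^{-M}|B'(t_n)|$, and all but finitely many $t_m \in \mathcal{Z}_{\tilde B}$ satisfy $|B(t_m)| < \frac{1}{8}|t_m|^{-M}|\tilde B'(t_m)|$. The idea is to exploit these super-polynomial smallness conditions together with the fact that
\[
F_\alpha := A_1 - \alpha \tilde A_1 = A_0(B - \alpha \tilde B) \in \he \qquad (\alpha \in \CC)
\]
is a family of elements of $\he$, and to force $F_\alpha$ (for generic $\alpha$) to have zeros accumulating near \emph{every} point of $\mathcal{Z}_B \cup \mathcal{Z}_{\tilde B}$, ultimately producing a contradiction with the growth restrictions coming from membership in $\he$.

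Concretely, fix $|\alpha|=1$ and a large integer $K$. For each $t_n \in \mathcal{Z}_B$ with $|t_n|$ large I plan to apply Rouch\'e on the circle $|z-t_n|=|t_n|^{-K}$: Taylor expansion gives $|B(z)|=|B'(t_n)|\cdot|t_n|^{-K}(1+o(1))$ on this circle, while $|\alpha\tilde B(z)|=|\tilde B(t_n)|(1+o(1))$. The assumption $|B'(t_n)|>8|t_n|^M|\tilde B(t_n)|$ (with $M\gg K$) then yields $|B(z)|>|\alpha\tilde B(z)|$ on the circle, with room to spare to absorb the $o(1)$ remainders --- the factor $8$ is precisely what produces the slack needed for Rouch\'e. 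Hence $F_\alpha$ has a unique zero in $D(t_n,|t_n|^{-K})$. A symmetric Rouch\'e argument near each $t_m\in\mathcal{Z}_{\tilde B}$, now comparing $\alpha\tilde B$ with $B$, yields a unique zero of $F_\alpha$ in $D(t_m,|t_m|^{-K})$.

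Combining this with the localization property (at most one zero of $F_\alpha$ per disc, up to finitely many exceptions), and using $\mathcal{Z}_B\cap\mathcal{Z}_{\tilde B}=\emptyset$, I obtain the lower bound $n_{F_\alpha}(R)\geq n_B(R)+n_{\tilde B}(R)-O(1)$. To conclude, I would confront this with an upper bound from Theorem \ref{inc}(ii): since $F_\alpha\in\he$, one has $|F_\alpha(z)|\lesssim |z|^N|A(z)|$ outside a set of zero linear density, and Jensen's formula then bounds $n_{F_\alpha}(R)$ by a constant times $\log M_A(R)$. Applying the same Jensen estimate separately to $A_1,\tilde A_1\in\he$ shows that $n_B(R)$ and $n_{\tilde B}(R)$ already each saturate this available capacity, so no single element of $\he$ can carry their sum, giving the desired contradiction.

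The main obstacle I anticipate is making the capacity comparison quantitatively sharp: naive Jensen loses a multiplicative $\log 2$ factor that might absorb the doubling. A possibly cleaner variant would select $\alpha^*\in\CC$ carefully so that, for infinitely many indices $n$, the Rouch\'e zeros of $F_{\alpha^*}$ arising from some $t_n\in\mathcal{Z}_B$ and from some $t_m\in\mathcal{Z}_{\tilde B}$ both end up in the same disc $D(t_k,|t_k|^{-K})$, producing infinitely many multiple zeros of $F_{\alpha^*}$ and contradicting part (iv) of Theorem \ref{local} directly. Either variant uses only Theorem \ref{inc}, Rouch\'e, and the localization property, thereby bypassing de Branges' Lemma 7 as announced in the preamble to the proposition.
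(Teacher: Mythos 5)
Your opening Rouch\'e step is exactly the paper's Step 1 (and the factor $8$ does indeed come from absorbing the distortion of $B$, $\tilde B$ on the small circles together with the range of $|\alpha|$), but neither of your two proposed routes to a contradiction can be made to work, and the real difficulty of the proposition lies precisely there. The counting route fails for a structural reason that no sharpening of Jensen's inequality will repair: $\mathcal{Z}_B\subset\mathcal{Z}_{A_1}\subset T$ and $\mathcal{Z}_{\tilde B}\subset\mathcal{Z}_{\tilde A_1}\subset T$ are \emph{disjoint subsets of} $T$, so $n_{A_0}(R)+n_B(R)+n_{\tilde B}(R)\le n_T(R)$; since a typical element of $\he$ (for instance $A(z)/(z-t_0)$) already has $n_T(R)+O(1)$ zeros, an element whose zero count is bounded by $n_T(R)$ violates no growth or capacity restriction whatsoever. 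Your assertion that $n_B$ and $n_{\tilde B}$ ``each saturate the available capacity'' is unsupported and false in general. The double-zero variant fails as well: the Rouch\'e zero attached to $t_n\in\mathcal{Z}_B$ lies in $D(t_n,|t_n|^{-K})$ and the one attached to $t_m\in\mathcal{Z}_{\tilde B}$ lies in $D(t_m,|t_m|^{-K})$; since $\mathcal{Z}_B\cap\mathcal{Z}_{\tilde B}=\emptyset$ and $T$ is power separated, these discs are pairwise disjoint once $K>N$, so no choice of $\alpha$ can make two such zeros collide.

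The contradiction in the paper is multiplicative rather than metric. One first shows (Step 2, a point your proposal does not address) that for a continuum of $\alpha$ the function $B-\alpha\tilde B$ also has no zeros near $T\setminus(\mathcal{Z}_B\cup\mathcal{Z}_{\tilde B})$, so that \emph{all} its zeros up to finitely many sit in the discs around $\mathcal{Z}_B\cup\mathcal{Z}_{\tilde B}$. This permits the factorization $B-\alpha\tilde B=B\tilde B R_\alpha e^{Q_\alpha}\Pi_\alpha$ with $\Pi_\alpha(z)=\prod (z-s_n)/(z-t_n)$ close to $1$ away from the discs. Morally $|B-\alpha\tilde B|\lesssim\max(|B|,|\tilde B|)$ while the right-hand side wants to grow like $|B\tilde B|$, which is impossible unless one of $B$, $\tilde B$ is essentially a polynomial --- but a priori the exponential factor $e^{Q_\alpha}$ could cancel the excess growth, and ruling this out requires comparing the factorizations for several distinct values of $\alpha$ (Steps 4--5 of the paper). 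This comparison across different $\alpha$'s is the key idea absent from your proposal.
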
  

\begin{proof} 
We will often use the following obvious observation: 
if $f$ is a function of finite order, $|z_0|>2$ and 
${\rm \dist}\,(z_0, \mathcal{Z}_f) \ge c |z_0|^{-K}$, then there exists $L>0$ (depending on $K$
and $c$) such that
\begin{equation}
\label{obv}
|f(z_0)|/2 \le |f(z)| \le 2|f(z_0)|, \qquad z\in D(z_0, |z_0|^{-L}).
\end{equation}
This statement follows by standard estimates of canonical products.
\medskip
\\
{\bf Step 1.} 
Assume that neither of the conclusions of the proposition holds. 
Consider the case where $|B'(t_n)| > 8|t_n|^M| \tilde{B}(t_n)|$
for any $M$ and all $n$ except a finite number. It follows from \eqref{obv}
(applied to $B(z)/(z-t_n)$) that there exists $L$ such that $|B(z)| \ge |t_n|^{-L} |B'(t_n)|/2$  
and also $|\tilde B(z)|\le 2|\tilde B(t_n)|$  
for $z \in C_n = \{|z-t_n| = |t_n|^{-L}\}$. Now if $M>L$, we have
$2|\tilde B(z)|<|B(z)|$. By the Rouch\'e theorem, we conclude that 
for any $\alpha$ with $1\le |\alpha|\le 2$, the function 
$B-\alpha\tilde B$ has exactly one zero in the disc $D(t_n, |t_n|^{-L})$, $t_n \in 
\mathcal{Z}_{B}$.  Similarly, 
$B-\alpha\tilde B$ has exactly one zero in the disc $D(t_n, |t_n|^{-L})$, 
$t_n \in  \mathcal{Z}_{\tilde{B}}$.
We conclude that 

{\it for any sufficiently large $L>0$ and any $\alpha$
with $1\le |\alpha|\le 2$,
the function  $B - \alpha\tilde B$ has exactly one zero in each 
disc $D(t_n, |t_n|^{-L})$, $t_n \in \mathcal{Z}_{B} \cup \mathcal{Z}_{\tilde{B}}$,
except   a finite number. }
\medskip
\\
{\bf Step 2.} 
Next we prove the following: {\it there exists an infinite set
$\mathcal{A} \subset \{1\le |z| \le 2\}$ 
such that for any $\alpha \in \mathcal{A}$ and any $L>0$ 
the function $B - \alpha \tilde B$ has at most finite  number of zeros
outside the union of the discs $D_n = D(t_n, |t_n|^{-L})$, 
$t_n \in \mathcal{Z}_{B} \cup \mathcal{Z}_{\tilde{B}}$. }

In view of localization we know that all zeros of $B-\alpha\tilde B$ 
are localized near $T$. Thus, we only need to show that for many values of $\alpha$ the function 
$B-\alpha \tilde B$ has no zeros in a neighborhood of $t_n \in
T\setminus (\mathcal{Z}_{B} \cup \mathcal{Z}_{\tilde{B}})$.
Put
$$
w_n = \frac{B(t_n)}{\tilde B(t_n)}, \qquad
t_n \in T\setminus (\mathcal{Z}_{B} \cup \mathcal{Z}_{\tilde{B}}). 
$$
Since $B, \tilde B$ are entire functions of finite order 
whose zeros are power separated from 
$T\setminus (\mathcal{Z}_{B} \cup \mathcal{Z}_{\tilde{B}})$, 
it follows that 
for any $M>0$ there exists sufficiently large $L>0$ such that 
for $z\in D_n = D(t_n, |t_n|^{-L})$ with 
$t_n \in T\setminus (\mathcal{Z}_{B} \cup \mathcal{Z}_{\tilde{B}})$,
\begin{equation}
\label{cov}
\bigg| \frac{B(z)}{\tilde B(z)}  - w_n\bigg| < |t_n|^{-M}, \qquad   z \in D_n. 
\end{equation}
Obviously, the discs 
$D(w_n, |t_n|^{-M})$, $t_n \in T\setminus (\mathcal{Z}_{B} \cup \mathcal{Z}_{\tilde{B}})$
do not cover the annulus $\{1\le|z|\le 2\}$ if $M$ is sufficiently large. 
Therefore, \eqref{cov} implies that we have a continuum of $\alpha$ 
with $1\le|\alpha|\le 2$ such that $\alpha \ne B(z)/\tilde B(z)$ 
for $z\in \cup_{t_n \in T\setminus (\mathcal{Z}_{B} \cup \mathcal{Z}_{\tilde{B}})} D_n$,
Thus, for such $\alpha$, all zeros of $B-\alpha\tilde B$
up to a finite number belong to $\cup_{t_n \in \mathcal{Z}_{B} \cup \mathcal{Z}_{\tilde{B}}} D_n$
as required.
\medskip
\\
{\bf Step 3.} By Steps 1 and 2, if $\alpha \in \mathcal{A}$, then
all zeros of the function $B-\alpha \tilde B$ are located near 
$\mathcal{Z}_{B} \cup \mathcal{Z}_{\tilde{B}}$. Then we can write
$$
B-\alpha \tilde B = B\tilde B R_\alpha e^{Q_\alpha} \Pi_\alpha, \qquad \Pi_\alpha(z) = 
\prod_{t_n \in \mathcal{Z}_{B} \cup \mathcal{Z}_{\tilde{B}}} \frac{z-s_n}{z-t_n},
$$
where $s_n \in D_n$ are the zeros of $B-\alpha \tilde B$, 
$R_\alpha$ is some rational function and $Q_\alpha$ 
is a polynomial.    
\medskip
\\
{\bf Step 4.} Assume that there exist $\alpha\ne \beta$ such that $Q_\alpha = Q_\beta = Q$. 
Then for $B_1 = e^{Q}B$ and $\tilde B_1 = e^{Q}\tilde B$ we have
$$
B_1-\alpha \tilde B_1 = B_1 \tilde B_1 R_\alpha  \Pi_\alpha, \qquad 
B_1-\beta \tilde B_1 = B_1 \tilde B_1 R_\beta  \Pi_\beta.
$$
It follows that 
$$
\beta-\alpha = B_1 (R_\alpha  \Pi_\alpha - R_\beta  \Pi_\beta), 
\qquad 
\alpha^{-1}-\beta^{-1} = \tilde B_1 (\alpha^{-1} R_\alpha  \Pi_\alpha - \beta^{-1} 
R_\beta  \Pi_\beta).
$$

Since for any fixed $\alpha$ the zeros $s_n$ of $B-\alpha \tilde B$ satisfy
$|s_n-t_n| <|t_n|^{-K}$ for any $K>0$, it is easy to see that $\Pi_\alpha$
admits the expansion 
$$
\Pi_\alpha = 1 +\sum_{k=1}^K  \frac{c_k}{z^k} + O\bigg(\frac{1}{z^{K+1}}\bigg) 
$$
as $|z|\to\infty$, $z\notin \cup_{t_n \in \mathcal{Z}_{B} \cup \mathcal{Z}_{\tilde{B}}}
D_n$ for any fixed $L$. 
Therefore, the function $R_\alpha  \Pi_\alpha - R_\beta  \Pi_\beta$
is either equivalent to $cz^{-K}$ for some $K\in \mathbb{Z}$ or decays faster than any power
when $|z|\to\infty$, $z\notin \cup_{t_n \in \mathcal{Z}_{B} \cup \mathcal{Z}_{\tilde{B}}}
D_n$.

Assume that $B_1$ is not a polynomial. Then $|B_1|$ tends to infinity faster than any power
along some sequence of points outside 
$\cup_{t_n \in \mathcal{Z}_{B} \cup \mathcal{Z}_{\tilde{B}}}
D_n$. In view of the form of $\Pi_\alpha$ 
and $\Pi_\beta$ this implies that for any $K>0$ we have
$$
|R_\alpha  \Pi_\alpha - R_\beta  \Pi_\beta| = o(|z|^{-K}),
\qquad
|z|\to\infty, \ \ z\notin \cup_{t_n \in \mathcal{Z}_{B} \cup \mathcal{Z}_{\tilde{B}}}
D_n.
$$
Similarly, if $\tilde B_1$ is not a polynomial, 
then for any $K>0$,
$$
|\alpha^{-1} R_\alpha  \Pi_\alpha - \beta^{-1} R_\beta  \Pi_\beta| = o(|z|^{-K}),
\qquad |z|\to\infty, \ \ z\notin \cup_{t_n \in \mathcal{Z}_{B} \cup \mathcal{Z}_{\tilde{B}}}
D_n.
$$
Since $\alpha\ne \beta$, it follows that $R_\alpha  \Pi_\alpha $ decays faster 
than any power, a contradiction. Thus, either $B_1$ or $\tilde B_1$ is a polynomial.  
\medskip
\\
{\bf Step 5.} It remains to consider the case when $Q_\alpha \ne Q_\beta$ for any $\alpha, 
\beta\in \mathcal{A}$, $\alpha\ne \beta$. Without loss of generality we may assume 
that there exist $\alpha_0, \alpha_1, \alpha_2, \alpha_3 \in\mathcal{A}$ 
with $Q_{\alpha_j}$ of the same 
degree $m$ such that the coefficients $c_{\alpha_j}$ at $z^m$ are different. 
Dividing by $e^{Q_{\alpha_0}}$ we obtain new functions $B_1$ and $\tilde B_1$ 
satisfying 
$$
\alpha_0 - \alpha_j = B_1 (e^{\tilde Q_j} R_{\alpha_j}\Pi_{\alpha_j} -
R_{\alpha_0}\Pi_{\alpha_0}), \qquad j=1,2,3,
$$
where $R_\alpha$, $\Pi_\alpha$ are defined as above and $\tilde Q_j = 
Q_{\alpha_j} - Q_{\alpha_0}$. Thus, 
$$
|B_1(z)| \asymp |R_{\alpha_j}(z)|^{-1} e^{-{\rm Re}\, \tilde Q_j(z)}, \qquad j=1,2,3, 
$$
as $|z|\to \infty$ along each ray $\{z=re^{i\theta}\}$ 
on which $\lim_{r\to\infty} \rea \tilde Q_j(re^{i\theta}) =\infty$ and 
outside the set $\cup_{t_n \in \mathcal{Z}_{B} \cup \mathcal{Z}_{\tilde{B}}} D_n$. 
Since the real part of a polynomial tends to infinity approximately on the half of the rays, 
there exists an angle $\Gamma$ of positive size such that 
two of the expressions $|R_{\alpha_j}(z)|e^{\rea \tilde Q_j(z)}$ have the same asymptotics 
inside the angle, say, $|R_{\alpha_1}(z)|e^{\rea \tilde Q_1(z)} \asymp 
|R_{\alpha_2}(z)|e^{\rea \tilde Q_2(z)}$ as $|z|\to \infty$, $z\in \Gamma$. This is obviously 
impossible if the leading coefficients of $\tilde Q_1$ and $\tilde Q_2$ are different. This contradiction 
completes the proof of the proposition.
\end{proof}


\subsection{End of the proof of Theorem \ref{ordering}} 
The rest of the proof is similar to the proof of \cite[Theorem 1.8]{abb}.
Recall that $A=A_1A_2=\tilde{A_1}\tilde{A_2}$. 
Since $A_1$ and $\tilde A_1$ belong to $\he$, we have 
\begin{equation}
\label{bro1}
\sum_{t_n\in T} \frac{|A_1(t_n)|^2}{\mu_n |A'(t_n)|^2} = 
\sum_{t_n \in \mathcal{Z}_{A_2}}  \frac{1}{\mu_n |A_2'(t_n)|^2} <\infty,
\end{equation}
and, analogously,
\begin{equation}
\label{bro2}
\sum_{t_n \in \mathcal{Z}_{\tilde A_2}}  \frac{1}{\mu_n |\tilde A_2'(t_n)|^2} <\infty,
\end{equation}

Assume that (i) in Proposition \ref{ordlemma} holds. Dividing if necessary
$B$ by a polynomial we may assume that
$|B'(t_{n_l})|\leq|\tilde{B}(t_{n_l})|$. Hence, we may construct a
lacunary canonical product $U_1$ such that
$\mathcal{Z}_{U_1}\subset\mathcal{Z}_B$ and
$$|B'(t_n)|\leq|\tilde{B}(t_n)|,\qquad t_n\in\mathcal{Z}_{U_1}.$$
Let $U_2$ be another lacunary product with zeros in $\mathbb{C} 
\setminus \cup_{t_n\in T} D(t_n, C|t_n|^{-N})$ such that
\begin{equation}
|U_2(t_n)|=o(|U_1(t_n)|),\qquad n\rightarrow\infty,\qquad t_n\in
T\setminus\mathcal{Z}_{U_1}, 
\label{Ueq}
\end{equation}
\begin{equation}
\label{Ueq2}
|U_2(t_n)|=o(|U_1'(t_n)|),\qquad t_n\in T. 
\end{equation}
This may be achieved if we choose zeros of $U_2$ to be much sparser
than the zeros of $U_1$. Let us show that in this case
$$
f:=A_1\cdot\frac{U_2}{U_1}\in\he,
$$
which contradicts the localization. Since $A_1$ is in $\he$, while
$U_1$ and $U_2$ are lacunary products, it is clear that 
conditions (ii) and (iii) hold for $f$. It remains to show that
$$
\sum_{t_n\in T}\frac{|f(t_n)|^2}{|A'(t_n)|^2\mu_n}<\infty.
$$
Since $f$ vanishes on $\mathcal{Z}_{A_1}\setminus\mathcal{Z}_{U_1}$, 
we need to estimate
the sums over $\mathcal{Z}_{A_2}$ and $\mathcal{Z}_{U_1}$.
By \eqref{Ueq} and \eqref{bro1}, we have
$$
\sum_{t_n\in\mathcal{Z}_{A_2}}\frac{|f(t_n)|^2}{|A'(t_n)|^2\mu_n}
=\sum_{t_n\in\mathcal{Z}_{A_2}}\frac{|U_2(t_n)|^2}{|U_1(t_n)|^2}
\cdot\frac{1}{|A'_2(t_n)|^2\mu_n}<\infty
$$
To estimate the sum over
$\mathcal{Z}_{U_1}$ note first that $BA_0$ divides
$A=\tilde{B}A_0\tilde{A}_2$, whence $B$ divides $\tilde{A_2}$.
Thus $\mathcal{Z}_{U_1}\subset\mathcal{Z}_{\tilde{A_2}}$. Also,
for $t_n\in\mathcal{Z}_{U_1}$,
\begin{equation}
|A'_1(t_n)|=|B'(t_n)|\cdot|A_0(t_n)|\leq|A_0(t_n)|\cdot|\tilde{B}(t_n)|=|\tilde{A}_1(t_n)|.
\label{Ueq3}
\end{equation}
Now by \eqref{Ueq2}, \eqref{Ueq3} and \eqref{bro2} we have
$$
\sum_{t_n\in\mathcal{Z}_{U_1}}\frac{|f(t_n)|^2}{|A'(t_n)|^2\mu_n}=
\sum_{t_n\in\mathcal{Z}_{U_1}}
\frac{|U_2(t_n)|^2}{|U_1'(t_n)|^2}\cdot
\frac{|A'_1(t_n)|^2}{|\tilde{A}_1(t_n)|^2|\tilde{A}'_2(t_n)|^2\mu_n}
\lesssim \sum_{t_n\in\mathcal{Z}_{U_1}}\frac{1}{|\tilde{A}'_2(t_n)|^2\mu_n}<\infty.
$$
Thus, $f\in\he$ and this contradiction completes the proof of Theorem \ref{ordering}. 
\qed
\bigskip


\section{Localization of type $2$}
In this section we prove Theorem \ref{type2}. In what follows we will need 
the following property of functions in the generalized
Hamburger--Krein class: since $1/F$ is a Cauchy transform and $|F'(z_n)|$ 
decays faster than any power, we have
\begin{equation}
\label{slu}
\sum_{n} \frac{z_n^k}{F'(z_n)}=0, \qquad k\in \mathbb{Z}_+.
\end{equation}
Otherwise, by the arguments from Subsection \ref{pd1}, $1/F$ 
decays at most polynomially away from the zeros whence $F$ itself is a polynomial. 
Furthermore, it follows from \eqref{slu} that for any $K, M>0$ 
\begin{equation}
\label{slu1}
|z|^M = o(|F(z)|), \qquad |z|\to\infty, \ \ {\rm dist}\, (z, \{z_n\}) \ge |z|^{-K}. 
\end{equation}
\medskip

\subsection{Proof of sufficiency in Theorem \ref{type2} \label{2suf}} 
Assume that $\he$ satisfies the conditions (i)--(iii).
We will show that in this case $\he$ has 
localization of type $2$. Let $T=T_1\cup T_2$ and let $A=A_1A_2$, where
$A_2$ is the Hamburger--Krein class function from (i).
Let $\mathcal{H}_2$ be the Cauchy--de Branges space constructed from
$T_2$ and $\mu|_{T_2}$, i.e., $\mathcal{H}_2= \mathcal{H}(T_2,A_2, 
\mu|_{T_2})$.

By the hypothesis, the orthogonal complement $\mathcal{L}$ to the polynomials 
in $L^2(T_2,\mu|_{T_2})$ is finite-dimensional. 
If $\{d_n\} \in L^2(T_2,\mu|_{T_2}) \setminus \mathcal{L}$, then there exists
a nonzero moment for the sequence $\{d_n\}$, that is, 
$\sum_{t_n \in T_2} \mu_n d_n t_n^K \ne 0$ for some $K\in\mathbb{N}_0$.
If $f(z) = A_2(z) \sum_{t_n \in T_2} \frac{\mu_n d_n}{z-t_n}$ is the corresponding
function from $\ho_2$, then, for any $M>0$, the function $f$
has a zero in $D(t_n, |t_n|^{-M})$, $t_n \in T_2$,  
when $n$ is sufficiently large (see Subsection \ref{pd1}). 
Thus, for any function in 
$\ho_2$ except some finite-dimensional subspace, its zeros are localized near the whole set $T_2$. 

Now let $\mathcal{G}$ be the subspace of the Cauchy--de Branges space 
$\mathcal{H}_2$ defined by 
$$
\mathcal{G} = \bigg\{A_2\sum_{t_n \in T_2} \frac{\mu_n d_n}{z-t_n}:\, \{d_n\}
\in \mathcal{L} \bigg\}.
$$
This is a finite-dimensional subspace of $\mathcal{H}_2$ 
and it is easy to see that $F\in \mathcal{G}$ if and only if
$F \in \mathcal{H}_2$ and, for any $M>0$,  
$|F(z)/A_2(z)| = o(|z|^{-M})$, as $|z|\to \infty$ outside a set of zero density
(see \eqref{or} and \eqref{LReq}). 
Thus, $\mathcal{G}$ is a finite-dimensional space of entire functions
with the division property and so it consists of the functions 
of the form $SP$ where $S$ is some fixed zero-free function
and $P$ is any polynomial of degree less than some fixed number $L$. 
Note that if $SP\in\mathcal{G}$ 
and so $SP/A_2$ decays faster than any power away from zeros of $A_2$,
then we may conclude that $A_2/S$ also is a function in Hamburger--Krein class. 
Replacing $A_2$ by $A_2/S$ we may 
assume that $\mathcal{G}$ consists of polynomials. 

We conclude that $\mathcal{H}_2$ has the localization
property and for any $F\in\mathcal{H}_2$ we either have
$T_F=\emptyset$ (i.e., $F$ is a polynomial) or $T_F=T_2$.

Let $f\in\mathcal{H}(T,A,\mu)$, $f(z)=A(z) \sum_{t_n\in
T}\frac{c_n\mu^{1/2}_n}{z-t_n}$. Since, by (iii),
$|A_2(t_n)|\mu^{1/2}_n$ tends to zero faster than any power of
$t_n\in T_1$ when $|t_n|\to \infty$, we have
\begin{equation}
A_2(z)\sum_{t_n\in
T_1}\frac{c_n\mu^{1/2}_n}{z-t_n}=\sum_{t_n\in
T_1}\frac{A_2(t_n)c_n\mu^{1/2}_n}{z-t_n}+H(z) \label{Heq}
\end{equation}
for some entire function $H$ (note that the residues on the left
and the right coincide). Let us show using Theorem \ref{inc} that $H\in\mathcal{H}_2$. Indeed, 
the Cauchy transform on the left-hand side of \eqref{Heq}
is bounded on $T_2$ and so 
$$
\sum_{t_n\in T_2}\frac{|H(t_n)|^2}{|A'_2(t_n)|^2\mu_n}<\infty,
$$
Conditions (ii) and (iii) of Theorem \ref{inc} are fulfilled since
$1/A_2$ is a Cauchy transform whence the same is true for $H/A_2$. 

Note also that $F(z):=A_2(z)\sum_{t_n\in
T_2}\frac{c_n\mu^{1/2}_n}{z-t_n}$ is by definition in $\mathcal{H}_2$. Thus,
$$
f = A_1 (g +H+F) 
$$
where $g(z)= \sum_{t_n\in T_1}\frac{c_nA_2(t_n)\mu^{1/2}_n}{z-t_n}$ 
and $H+F\in\mathcal{H}_2$. 

Assume that $H+F\neq0$. Then, either $H+F$ is a polynomial or the
zeros of $H+F$ are localized near $T_2$ up to a finite set. 
In both cases, there exists $K>0$ 
such that the discs $D(t_k,r_k)$, $t_k\in T$, $r_k=|t_k|^{-K}$, 
are pairwise disjoint and, for sufficiently large $k$ we have 
$$
|H(z)+F(z)|>1, \qquad |z-t_k| = r_k.
$$
In the case when the zeros of $H+F$ are localized 
near the whole set $T_2$ up to a finite set, we use Corollary \ref{maxx}
and \eqref{slu1} applied to $A_2$.
Since, $|g(z)|\to 0$ whenever $|z-t_k|=r_k$ and $k\rightarrow\infty$, we
conclude by the Rouch\'e theorem that $A_1(g+H+F)$ has exactly one zero
in each $D(t_k,r_k)$, $t_k\in T_1$, except possibly a finite
number. Also, if $H+F$ is not a polynomial, then $f$ has zeros near the whole set $T_2$
up to a finite subset (again apply the Rouch\'e
theorem to small disks $D(t_k,r_k)$, $t_k\in T_2$,
$r_k=|t_k|^{-K}$, and use the fact that $|H+F|\gtrsim1$,
$|z-t_k|=r_k$).

It remains to consider the case $H+F=0$, i.e., $f=A_1g$. Since  the polynomials
are dense in $L^2(T,\tilde{\mu})$, the space
$\mathcal{H}(T_1,A_1,\tilde{\mu})$ has the strong localization property,
and so $T_f = T_1$ up to a finite set. 
\medskip


\subsection{Proof of necessity in Theorem \ref{type2} \label{2nec}} 
Assume that $\mathcal{H}(T,A, \mu)$ has the
localization property of type $2$. Let
$f$ be a function from $\mathcal{H}(T,A,\mu)$ such that
$\#(T\setminus T_f)=\infty$. Then, by Lemma \ref{TfLemma} there
exists $T_1$ ($T_1=T_f$ up to a finite set) and 
a function $A_1$ with simple zeros in $T_1$ such that $A_1\in\ho_2$. We
now may write $A=A_1A_2$ for some entire $A_2$ with
$\mathcal{Z}_{A_2}=T_2$. 
\medskip
\\
{\bf Proof of (i).}  Since $A_1\in\he$, we have
$$
\frac{1}{A_2(z)} = 
\frac{A_1(z)}{A(z)} = 
\sum_{t_n\in
T}\frac{c_n\mu^{1/2}_n}{z-t_n}
$$
for some $\{c_n\}\in\ell^2$. It is immediate that $c_n= 1/A_2'(t_n)$, 
$t_n\in T_2$, and $c_n=0$ otherwise. Also we have 
$$
\sum_{t_n\in T_2} \frac{1}{|A_2'(t_n)|^2\mu_n} <\infty.
$$
Since localization property implies that $\mu_n$ decay faster than any power, we conclude that
$A_2$ belongs to the generalized Hamburger--Krein class.
\medskip
\\
{\bf Proof of (ii).} Note that by \eqref{slu} (applied to $A_2$) we have
$$
\sum_{t_n\in T_2} \frac{t_n^k}{A_2'(t_n)}=0, \qquad k\in \mathbb{Z}_+,
$$
whence the sequence $\{(\mu_n A_2'(t_n))^{-1}\}_{t_n\in T_2}$ is orthogonal
to all polynomials in $L^2(T_2, \mu|_{T_2})$. 

Now assume that $\{c_n\} \in\ell^2$ 
and $\{c_n\mu_n^{-1/2}\}$ is orthogonal to all polynomials in $L^2(T_2, \mu|_{T_2})$.
Consider the function $f(z) = A_2(z)\sum_{t_n\in T_2}\frac{c_n\mu^{1/2}_n}{z-t_n}$ 
which belongs to $\mathcal{H}_2= \mathcal{H}(T_2,A_2, 
\mu|_{T_2})$. Since $A_1f \in \he$ and $\he$ has localization property of type 2,
the zeros $\{z_n\}$ of $f$ either form a finite set 
or are localized near $T_2$. 
However, in the latter case $f$ satisfies \eqref{yui0},
a contradiction to the fact that, by \eqref{or}, $f/A_2$ decays faster than any power outside
some set of zero area density. 

Thus, any function $f$ constructed above is of the form $PS$
where $P$ is a polynomial 
and $S$ is some zero-free entire function. It is clear from the localization
property that the function $S$ must be the same for all such $f$-s 
(up to multiplication by a constant), and that $A_2 /S$ also is a Hamburger--Krein
class function. Replacing $A_2$ by $A_2 /S$ we may assume that $f$ is a  polynomial. 

To summarize, for any $\{c_n\mu_n^{-1/2}\}$ which 
is orthogonal to all polynomials in $L^2(T_2, \mu|_{T_2})$, the function $f$
is a polynomial. Since $A_1f\in \he$, it remains to show that the degrees of polynomials 
$P$ such that $PA_1\in \he$ are uniformly bounded. Let us show that the
property that $PA_1\in\he$ for any polynomial $P$ contradicts the
localization property of type 2. If $PA_1\in\he$ for any polynomial $P$,
then the function $A_1(z) \sum_{k\ge 0} a_k z^k$ is in $\he$
for any sequence $\{a_k\}$ such that 
$$
\sum_{k\ge 0} |a_k|\cdot \|z^kA_1\|_{\he} <\infty.
$$ 
This contradicts the localization property since for nonzero
$a_k$ decaying sufficiently rapidly the function 
$A_1(z) \sum_{k\ge 0} a_kz_k$ can not have all but finite number of zeros localized near
$T_1$ or near $T$. 
\medskip
\\
{\bf Proof of (iii).}
Assume that (iii) is not
satisfied, that is, the polynomials are not dense in
$\mathcal{H}(T_1,\tilde{\mu})$, and so this space does not have
the strong localization property. Then there exists $G(z)=A_1(z) \sum_{t_n\in
T_1}\frac{c_nA_2(t_n)\mu^{1/2}_n}{z-t_n}\in\mathcal{H}(T_1,A_1,\tilde{\mu})$,
$\{c_n\}_{t_n\in T_1} \in \ell^2$, 
with the property that there exists an infinite sequence of disks
$D(t_{n_j},|t_{n_j}|^{-M})$, $t_{n_j}\in T_1$, such that
$\#D(t_{n_j},|t_{n_j}|^{-M})\cap \mathcal{Z}_G=0$. Now put
$$
H(z)=A_2(z)\sum_{t_n\in T_1}\frac{c_n\mu^{1/2}_n}{z-t_n}-
\sum_{t_n\in T_1}\frac{c_nA_2(t_n)\mu^{1/2}_n}{z-t_n}.
$$
The function $H$ is entire and, as in the proof of sufficiency,
$H\in\mathcal{H}_2$. This means
that $H$ can be written as
$$
H(z)=-A_2(z)\sum_{t_n\in T_2}\frac{c_n\mu^{1/2}_n}{z-t_n}\quad
\text{ for some } \ \{c_n\}_{t_n\in T_2} \in\ell^2.
$$
Now put $f(z)=\sum_{t_n\in T}\frac{c_n\mu^{1/2}_n}{z-t_n}$. Then $f\in \he$ 
and, by the construction, 
$$
f(z)=-A_1(z)H(z)  + A_1(z)\bigg(\sum_{t_n\in T_1}\frac{c_nA_2(t_n)\mu^{1/2}_n}{z-t_n} + H(z)\bigg) = G(z).
$$
However, the zeros of $g$ are not localized near
the whole $T_1$, a contradiction.
\bigskip


\section{Examples of localization of type 2 \label{exx}}

Here we  give a series of examples of spaces $\mathcal{H}(T,A,\mu)$
with localization of type 2. Clearly, the most subtle part is to satisfy condition (ii)
of Theorem \ref{type2}. However, there exists a standard way to avoid 
completeness of polynomials with finite defect. For similar
constructions see \cite{BS1}.

Let $A$ be an entire function with power separated zero set $T = \{t_n\}$
and of the Hamburger--Krein class. Then, in particular, 
for any $K, M>0$, we have $|A(z)|\gtrsim |z|^M$ when $z\notin \cup_n D(t_n, (|t_n|+1)^{-K})$.
Fix some $N\in\mathbb{N}$ such that $\sum_n |t_n|^{-N} <\infty$, and put
\begin{equation}
\label{mea}
\mu_n = |t_n|^{2N}|A'(t_n)|^{-2}.
\end{equation}
Then the polynomials belong to the space $L^2(\mu)$, $\mu=\sum_n \mu_n\delta_{t_n}$,
but are not dense there. Indeed, for any $k\in\mathbb{N}_0$, we have
$$
\frac{z^{k+1}}{A(z)} = \sum_n \frac{t_n^{k+1}}{A'(t_n) (z-t_n)},
$$
whence $\sum_n c_n \mu_n t_n^k = 0$ (take $z=0$). 
Hence, for $c_n = (A'(t_n) \mu_n)^{-1}$ we have 
$\{c_n\}\in L^2(T, \mu)$. It remains to show that the polynomials have finite 
codimension in  $L^2(T, \mu)$. The following proposition shows that this is
often true. 

Recall that for a positive increasing function $w$ on $\mathbb{R}_+$, 
its Legendre transform $w^\#$ is defined as
$w^\#(x)=\sup_{t\in\mathbb{R}_+} \big(xt-w(t)\big)$. 
If, moreover, $w$ is convex, then $(w^\#)^\# = w$.
In what follows we will use the following technical condition on $w$:
\begin{equation}
\label{tech}
w^\#(x+t) - w^\#(x) \lesssim x^{-1} w(x), \qquad x>1,\  0 \le t \le 1.
\end{equation}
Also recall that a positive increasing function $M$ on $\mathbb{R}_+$ 
is said to be a {\it normal weight} if $w(t) = \log M(e^t)$ is a convex function of $t$.

\begin{proposition}
\label{lezh}
Let $A$ be a Hamburger--Krein class function and let $\mu$ be defined by
\eqref{mea}. Assume that there exist
\begin{itemize}
\item
a finite set of rays $L_j = \{e^{i\theta_j}\}$, $j=1, \dots, J$, 
which divide the plane into a union of angles of size less than $\pi/\rho$, where $\rho$
is the order of $A$\textup; 
\item 
a finite set of positive increasing normal weights $M_j$ on $\mathbb{R}_+$ 
such that the Legendre transforms of the functions 
$w_j(t) = \log M_j(e^t)$ satisfy \eqref{tech},
\end{itemize}
such that, for some $K>0$, 
\begin{equation}
\label{l1}
|A(z)| \le (|z|+1)^K M_j(|z|), \qquad z\in L_j,
\end{equation}
and 
\begin{equation}
\label{l2}
|A'(t_n)| \gtrsim |t_n|^{-K} \max_j M_j(|t_n|), \qquad t_n\in T.
\end{equation}
Then the polynomials have finite \textup(and nonzero\textup) codimension in $L^2(T, \mu)$.
\end{proposition}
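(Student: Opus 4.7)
Plan. Let $\mathcal{L}\subset L^2(T,\mu)$ denote the orthogonal complement of the polynomials; nonzero-ness is already established by the preceding paragraph (via the explicit sequence $c_n=1/(A'(t_n)\mu_n)$), so the task is to show $\dim\mathcal{L}<\infty$. The strategy is to translate $\mathcal{L}$ into a space of entire functions and bound its dimension by a Phragm\'en--Lindel\"of argument coupled with Legendre-transform coefficient estimates.

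First I would, for each $c=\{c_n\}\in\mathcal{L}$, set
\begin{equation*}
G_c(z)=\sum_n \frac{c_n\mu_n}{z-t_n}, \qquad f_c(z)=A(z)G_c(z).
\end{equation*}
Then $f_c$ is entire of order at most $\rho$, the correspondence $c\mapsto f_c$ is injective (since $f_c(t_n)=A'(t_n)c_n\mu_n$), and the identity $\|c\|_{L^2(\mu)}^2=\sum_n |f_c(t_n)|^2/(|A'(t_n)|^2\mu_n)$ embeds the image into $\he$. Because all moments $\sum_n c_n\mu_n t_n^k$ vanish for $k\ge 0$, relation \eqref{or} gives $|G_c(z)|=o(|z|^{-M})$ as $|z|\to\infty$ outside some set of zero area density, for every $M>0$.

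Second, I would establish the global growth bound
\begin{equation*}
|f_c(z)|\lesssim_M (|z|+1)^{K-M}\max_j M_j(|z|), \qquad |z|\to\infty,
\end{equation*}
valid for every $M>0$. On each ray $L_j$, away from the exceptional zero-density set, hypothesis \eqref{l1} together with the fast decay of $G_c$ gives $|f_c(z)|\lesssim_M (|z|+1)^K M_j(|z|)|z|^{-M}$; averaging over concentric circles that miss the exceptional set (such circles exist in abundance because the bad set has zero area density) propagates this pointwise bound to the full ray, with only a controlled loss. Since the rays $L_j$ partition $\CC$ into finitely many closed angles each of opening strictly less than $\pi/\rho$, and $f_c$ has order at most $\rho$, Phragm\'en--Lindel\"of applied angle-by-angle upgrades the ray bounds to the claimed global estimate.

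Finally, I would extract finite dimensionality. Applying Cauchy's coefficient inequality $|a_k(f_c)|\le r^{-k}\max_{|z|=r}|f_c(z)|$ to the bound above and optimizing $r$ via the Legendre transform of $w_j(t)=\log M_j(e^t)$, condition \eqref{tech} converts the continuous majorant into Taylor coefficient estimates of the form $|a_k(f_c)|\lesssim_M \exp\bigl(-\max_j w_j^{\#}(k)\bigr)k^{-M}$ for every $M>0$. Combined with the sampling identity $f_c(t_n)=A'(t_n)c_n\mu_n$, the lower estimate \eqref{l2}, and the $L^2$-summability of $c$, this forces $f_c$ to lie in a fixed finite-dimensional subspace of entire functions, giving $\dim\mathcal{L}<\infty$. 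The main obstacle will be precisely this last step: converting the angular growth estimate into sharp coefficient decay uses the smoothness of $w_j^{\#}$ ensured by \eqref{tech} in an essential way, and matching the resulting decay with the sampling data on $T$ (via \eqref{l2}) is what cuts the admissible entire functions down to a finite-dimensional family, in the spirit of the weighted polynomial approximation techniques of \cite{BS1}.
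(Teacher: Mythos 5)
Your overall skeleton (send each $\{c_n\}$ in the orthogonal complement $\mathcal{L}$ to the entire function $f_c=AG_c$, show $f_c$ is a polynomial of uniformly bounded degree, conclude $\dim\mathcal{L}<\infty$) is the same as the paper's, but the central quantitative step is missing and the substitute you propose does not close. The decay of $G_c$ you import from \eqref{or} is only $o(|z|^{-M})$ for each \emph{fixed} $M$, with constants and exceptional sets depending on $M$; multiplied against \eqref{l1} this yields at best $|f_c(z)|\lesssim_M (|z|+1)^{K-M}M_j(|z|)$, which still contains the superpolynomially growing factor $M_j(|z|)$ and is therefore not a polynomial bound --- you cannot let $M\to\infty$ pointwise because the implied constants blow up. This is exactly where \eqref{mea}, \eqref{l2} and \eqref{tech} must enter. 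The paper instead writes, for every $k$, $z^kf(z)=A(z)\sum_n c_n\mu_n t_n^k/(z-t_n)$ (legitimate since all moments of $\{c_n\mu_n\}$ vanish), bounds the sum termwise using $\mu_n=|t_n|^{2N}|A'(t_n)|^{-2}$ and $|A'(t_n)|\gtrsim|t_n|^{-K}M_j(|t_n|)$ to get $|f(z)|\lesssim |A(z)|\,|z|^{-k}\sup_n |t_n|^{k+m}/M_j(|t_n|)$ outside the discs $D_n$, and then optimizes over $k$: the Legendre identity $(w^\#)^\#=w$ together with \eqref{tech} shows the optimized bound is $\lesssim |z|^{N_1}$ near each ray $L_j$ with $N_1$ uniform. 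In other words, the true decay of $G_c$ is quantitative, of size $|z|^{O(1)}/M_j(|z|)$, precisely enough to cancel \eqref{l1}; Phragm\'en--Lindel\"of in the angles of opening less than $\pi/\rho$ then makes each $f_c$ a polynomial of degree at most $N_1$, and finite codimension follows.

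Your attempted final step does not repair this: the coefficient bounds $|a_k(f_c)|\lesssim_M e^{-w_j^\#(k)}k^{-M}$ (for every $M$, non-uniformly) cut out an infinite-dimensional space of entire functions, and no mechanism is given by which the sampling identity $f_c(t_n)=A'(t_n)c_n\mu_n$ together with \eqref{l2} reduces it to finitely many dimensions --- you flag this yourself as ``the main obstacle,'' and it is in fact the entire content of the proposition. A secondary issue is that propagating a bound from a ray minus a zero-area-density set to the whole ray by ``averaging over concentric circles'' needs an argument (zero area density does not prevent the exceptional set from covering whole circles or long arcs of a ray); this difficulty disappears in the paper's proof because the termwise estimate of $z^kf$ holds everywhere outside the separated discs $D_n$.
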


\begin{example} {\rm The following functions $A$ 
satisfy the conditions of Proposition \ref{lezh} (if not specified, $A$ is assumed to be 
a zero genus canonical product with zero set $T$):
\begin{itemize}
\item
$t_n = 2^n$, $n\in\mathbb{N}$;
\item
$t_n = n^\alpha$, $n\in\mathbb{N}$, $\alpha>2$; 
\item
$t_n = |n|^\alpha\, {\rm sign}\, n$, $n\in\mathbb{Z}$, $\alpha>1$;
\item 
$A(z) = z^{-1} \sin (\pi z) \sin (\pi i z)$, $T = \mathbb{Z} \cup i\mathbb{Z}$;
\item 
$A(z) = \sigma(z)$, the Weierstrass $\sigma$-function, $T = \mathbb{Z} + i \mathbb{Z}$. 
\end{itemize}  
In all above examples except the first one, one should take 
$w_j(t) = e^{\beta t}$ for some $\beta>0$, whence
$w^\#(x) = \frac{x}{\beta}\big(\log\frac{x}{\beta} - 1\big)$. In the first example 
$w$ and $w^\#$ are quadratic functions. Condition \eqref{tech} is satisfied 
in all these cases. }
\end{example}

\begin{proof}[Proof of Proposition \ref{lezh}]
Assume that $\{c_n\}$ is orthogonal to the polynomials in $L^2(T, \mu)$ and consider the function
$$
f(z) = A(z)\sum_n \frac{c_n\mu_n}{z-t_n}.
$$
We will show that any such function $f$
is a polynomial whose degrees are uniformly bounded above. This will prove the 
proposition.

Note that $f\in \he$, since $\{c_n \mu_n^{1/2} \} \in \ell^2$. It is clear that, for any 
$k\in\mathbb{N}_0$, one has 
$$
z^k f(z) - A(z)\sum_n \frac{c_n\mu_n t_n^k}{z-t_n} \equiv 0.
$$

Since $T$ is power separated, there exists a constant $N_0\in\mathbb{N}$
such that the discs $D_n = D(t_n, |t_n|^{-N_0})$ are pairwise  disjoint
and, from \eqref{l1}, $|A(z)| \le |z|^{-K} M_j(|z|)$ for any $j$
when ${\rm dist}\, (z, L_j) \le 2|z|^{-N_0}$ and $|z|$ is sufficiently large. 

Now fix some ray $L_j = L$ and let $M=M_j$, $w=w_j$. For $z\notin \cup_n D_n$ 
and for any $k\in \mathbb{N}_0$ we have, using \eqref{mea} and \eqref{l2},
$$
\begin{aligned}
|f(z)| &  
\le \frac{|A(z)|}{|z|^k} \sum_n \frac{|t_n|^k |c_n|\mu_n}{|z-t_n|} \le
\frac{|A(z)|}{|z|^k} \sum_n \frac{|c_n| \mu_n^{1/2} |t_n|^{k+N} } {|A'(t_n)|\cdot |z-t_n|} \\
& \lesssim
\frac{|A(z)|}{|z|^k} \sum_n \frac{|t_n|^{k+2N+N_0+K}} {|t_n|^N M(|t_n|)} \lesssim
\frac{|A(z)|}{|z|^k} \sup_n \frac{|t_n|^{k+2N+N_0+K}} {M(|t_n|)}. 
\end{aligned}
$$
Put $m = 2N+N_0 +K$. Then 
$$
\log \sup_n \frac{|t_n|^{k+m}} {M(|t_n|)} = \sup_n \Big( (k+m)\log|t_n| - 
w(\log|t_n|) \Big) \le w^{\#}(k+m). 
$$
Since the estimate for $f$ holds for all $k$, we now have for 
$z\notin \cup_n D_n$ and ${\rm dist}\, (z, L_j) \le 2|z|^{-N_0}$,
$$
\log|f(z)| \le \log M(r) + K\log r + \inf_{k\in\mathbb{N}_0} \Big(w^{\#}(k+m) - k\log r \Big) +O(1),
$$
when $r=|z|$ is sufficiently large. 
It follows from \eqref{tech} that $x\log r - w^\#(x) = y\log r - w^\#(y) + O(\log r)$
whenever $|x-y|\le 1$ and $w^\#(x) \le x\log r$.
Then, obviously, 
$$
\begin{aligned}
\inf_{k\in\mathbb{N}_0} \Big(w^{\#}(k+m) - k\log r\Big) & = m\log r -
\sup_{k\in\mathbb{N}_0} \Big((k+m) \log r - w^{\#}(k+m)\Big) 
\\ & \le
-\sup_{x\ge 0} (x\log r - w^{\#}(x))  +O(\log r) = -w(\log r) + O(\log r),
\end{aligned}
$$
where the constants involved in $O(\log r)$ depend only on $m$ and the constants from \eqref{tech}.
We used that $w$ is convex and so $(w^\#)^\# = w$. Since $w(\log r) = M(r)$, we conclude that
$$
|f(z)| \lesssim (|z|+1)^{N_1}, \qquad  
z\notin \cup_n D_n, \ \ {\rm dist}\, (z, L_j) \le 2(|z|+1)^{-N_0},
$$
where $N_1$ admits a uniform bound. 
It follows that $|f(z)| \lesssim (|z|+1)^{N_1}$ on the ray $L_j$ for any $j$. Now the standard 
Phragm\'en--Lindel\"of principle shows that $f$ is a polynomial of degree at most $N_1$.
\end{proof}

\begin{remark}
{\rm Note that using the same argument one can show that 
in conditions of Proposition \ref{lezh} the polynomials are dense in $L^2(T, \mu)$ 
when $\mu_n = |t_n|^{-N}|A'(t_n)|^{-2}$ and $N$ is sufficiently large.}
\end{remark}

\begin{example}
{\rm One can easily give examples of 
localization of type 2 choosing the measure $\mu_1$ on $T_1$ to be sufficiently small. 
The space $\he$ has localization of type 2 in all cases given below
\begin{enumerate}
\item
Let $T_2 = \{ |n|^\alpha\, {\rm sign}\, n \}_{n\in\mathbb{Z}}$, $\alpha>1$, 
let $A_2(z) = \prod_{t_n\in T_2} (1-z/t_n)$, and let
$T_1 = \{ik^{\beta}\}_{k\ge 1}$, $0<\beta<\alpha$. Put
$$
\mu_n = 
\begin{cases}
|t_n|^{N}|A_2'(t_n)|^{-2},\quad & t_n\in T_2, \\
e^{-|t_n|^\gamma}, & t_n\in T_1,
\end{cases}
$$
where $N>0$ and $\gamma>1/2$ for $0<\beta\le 1/2$, while for $\beta>2$
we assume $\gamma >1/\beta$.
Density of polynomials in $L^2(T_1, \tilde \mu_1)$, 
$\tilde \mu_1 = \sum_{t_n\in T_1} |A_2'(t_n)|^2 \mu_n \delta_{t_n}$,
follows from \cite[Appendix 2]{BS1}.

\item
Let $T_2 = \mathbb{Z} \cup i\mathbb{Z}$, $A(z) = z^{-1} \sin (\pi z) \sin (\pi i z)$, 
and let $T_1 = e^{i\pi/4} (\mathbb{Z} \cup i\mathbb{Z}) \setminus\{0\}$.
For $M, N\in\mathbb{N}$, put 
$$
\mu_n = 
\begin{cases}
|t_n|^{M} e^{-2\pi|t_n|},\quad & t_n\in T_2, \\
|t_n|^{-N} e^{-(2\sqrt{2}+2)\pi|t_n|}, & t_n\in T_1.
\end{cases}
$$

\item
Let $T_2 = \mathbb{Z} + i \mathbb{Z}$, $A(z) = \sigma(z)$,
and let $T_1 = \mathbb{Z} + i \mathbb{Z} + 1/2$.
For $N\in\mathbb{N}$ and $\gamma>2$, put 
$$
\mu_n = 
\begin{cases}
|t_n|^{N} |\sigma'(t_n)|^{-2}, \quad & t_n\in T_2, \\
e^{-|t_n|^\gamma}, & t_n\in T_1.
\end{cases}
$$
\end{enumerate}  }
\end{example}


\end{document}